\title{Semi-invariant Submanifolds of Normal Complex Contact Metric Manifolds  }
\author{\textbf{Aysel TURGUT VANLI}\\
	{\small Department of Mathematics, 	Gazi University,
	TURKEY}\\{\small avanli@gazi.edu.tr}\\
	\textbf{\.Inan \"Unal}\\{\small 
	Department of Computer Engineering, 	Munzur University,
	TURKEY}\\{\small inanunal@munzur.edu.tr}}
\theoremstyle{plain}
\newtheorem{defn}{Definition}
\theoremstyle{plain}
\newtheorem{thm}{Theorem}[section]
\newtheorem{cor}[thm]{Corollary}
\newtheorem{lem}[thm]{Lemma}
\newtheorem{prop}[thm]{Proposition}
\theoremstyle{definition}
\theoremstyle{remark}
\numberwithin{equation}{section}
\date{}
\begin{document}

\maketitle
\setlength{\parindent}{0pt}

\begin{abstract}
	 In this paper, we study on semi-invariant submanifolds of normal complex contact metric manifolds. We give the definition of such submanifolds and we obtain useful relations. Moreover, we give the integrability conditions of distributions. 
\end{abstract}
\textbf{Keywords:} Complex contact manifolds; normal complex contact metric manifold; semi-invariant submanifold; integrability of distributions\\
\textbf{AMS 2010 classification}: Primary 53C15; Secondary 53C25, 53D10

\maketitle
\section{Introduction}
The complex contact manifold notion is  a branch of contact geometry which has not much studied as much as real contact manifolds. In 2000, Korkmaz \cite{KB2000} gave a normality notion for such manifolds. In \cite{BL2010}, Blair presented a comprehensive introduction to complex contact manifolds. \par 
Submanifold theory of contact manifolds is an interesting topic in contact geometry. Especially, the classes of submanifolds such as, invariant, semi-invariant, anti-invariant, slant etc., have become interesting for researchers, recent years.  In real contact geometry this subject have been studied since 1970s. In 1980s Bejancu \cite{bejancualmost} gave the definition of almost semi-invariant submanifolds and some subclasses. Also same author worked on semi-invariant submanfiolds of Sasakian manifolds \cite{bejancusemi}. Besides, in complex contact geometry the special subclasses of submanifolds have not studied yet. 
This is an area of  awaiting attention, with many open problems. One of presented authors studied on this subject \cite {vanli2019remark,vanli2020comment}. \par

With above reasons  our aim, is to give an introduction for the special submanifolds of complex contact manifolds. We take into consider the normality notion is given by Korkmaz \cite{KB2000}. By this way,  our paper is organized as follow. The first section is on fundamental facts on complex contact manifolds. In the second section we give the definition for a semi-invariant submanifold of a normal complex contact metric manifold and obtain some relations. Finally, we give the integrability conditions of distributions in the last section .
\section{Preliminaries}
The definition of a complex contact manifold was given by Kobayashi \cite{KOb59} as follows:
Let $M$ be a complex manifold of odd complex dimension $2m+1$ covered by an
open covering $\mathcal{A=}\left\{ \mathcal{U}_{i}\right\} $ consisting of
coordinate neighborhoods. If there is a holomorphic $1$-form $\omega _{i}$
on each $\mathcal{U}_{i}\in \mathcal{A}$ in such a way that for any $%
\mathcal{U}_{i},\mathcal{U}_{j}\in \mathcal{A}$ and for a holomorphic function $f_{ij}$ on $\mathcal{U}_{i}\cap \mathcal{U}_{j}\neq \emptyset $
\begin{eqnarray*}
	\omega _{i}\wedge (d\omega _{i})^{m}\neq 0\text{ in }\mathcal{U}_{i}\text{, }\\
	\omega _{i}=f_{ij}\omega _{j},\text{ }\mathcal{U}_{i}\cap \mathcal{U}%
	_{j}\neq \emptyset 
\end{eqnarray*}%
then the set $\left\{ \left( \omega _{i},\mathcal{U}%
_{i}\right) \mid \mathcal{U}_{i}\in \mathcal{A}\right\} $ of local structures is called complex contact structure, and with this structure $ M $ is called a complex contact manifold. 
Let $ (M, \omega_{i}) $ be a complex contact manifold. For every $ p \in M $ we have a subspace of $ T_{p}M $ by kernel of $ \omega_{i} $: 
\begin{equation*}
\mathcal{H}_{i}=\{X_{p}:\omega_{i}(X_{p})=0, X_{p}\in T_{p}M\}.
\end{equation*}
Then on $ \mathcal{U}_{i}\cap \mathcal{U}_{j} \neq0 $ we have $  \mathcal{H}_{i}= \mathcal{H}_{j} $ and so $ \mathcal{H}=\cup \mathcal{H}_{i} $. $  \mathcal{H} $ is well-defined, $ 2m-$complex dimensional non-integrable subbundle on $ M $ and it is called the horizontal subbundle. \\
Let $ M $ be a complex contact manifold of odd complex dimension $ 2m+1 $. Ishihara and Konishi \cite{IK80} proved that $ M $ admits always an almost contact structure of $ C^{\infty} $. They also give the Hermitian metric. An odd complex $ 2m+1 -$dimensional complex manifold with Hermitian metric and almost contact structure is called \textit{complex almost contact metric manifold}. 
\begin{defn} \label{complexalmostscontact}
	Let $M$ be a odd complex $ 2m+1 -$dimensional complex manifold with complex structure $ J $, Hermitian metric $ g $, and $\mathcal{A=}\left\{ 
	\mathcal{U}_{i}\right\} $ be an open covering of $M$ with coordinate
	neighborhoods $\{\mathcal{U}_{i}\mathcal{\}}.$ If $M$ satisfies the
	following two conditions then it is called a \textit{complex almost contact
		metric manifold}:
	
	1. In each $\mathcal{U}_{i}$ there exist $1$-forms $u_{i}$ and $%
	v_{i}=u_{i}\circ J$, with dual vector fields $U_{i}$ and $V_{i}=-JU_{i}$ and 
	$(1,1)$ tensor fields $G_{i}$ and $H_{i}=G_{i}J$ such that 
	\begin{equation} \label{G^2veH^2}
	H_{i}^{2}=G_{i}^{2}=-I+u_{i}\otimes U_{i}+v_{i}\otimes V_{i}
	\end{equation}%
	\begin{equation*} \label{H=GJ}
	G_{i}J=-JG_{i},\quad GU_{i}=0,\quad
	\end{equation*}%
	\begin{equation*} \label{g(GX,Y)=-g(X,GY)}
	g(K,G_{i}L)=-g(G_{i}K,L).
	\end{equation*}%
	2.On $\mathcal{U}_{i}\cap \mathcal{U}_{j}\neq \emptyset $ we have 
	\begin{eqnarray*}
		u_{j} &=&cu_{i}-dv_{i},\quad v_{j}=du_{i}+cv_{i},\; \\
		G_{j} &=&cG_{i}-dH_{i},\quad H_{j}=dG_{i}+cH_{i}
	\end{eqnarray*}%
	where $c$ and $d$ are functions on $\mathcal{U}_{i}\cap \mathcal{U}_{j}$
	with $c^{2}+d^{2}=1$ \cite{IK80}.
\end{defn}

In addition, we have 
\begin{align*}
du(K,L) &=g(K,GL)+(\sigma \wedge v)(K,L),~~~ \\
dv(K,L) &=g(K,HL)-(\sigma \wedge u)(K,L)
\end{align*}
where $\sigma (K)=g(\nabla _{K}U,V)$, and $\nabla $ being the Levi-Civita connection of $g$ \cite{IK80}.

Ishihara and Konishi \cite{IK79} defined following tensors ;
\begin{eqnarray*}
	S(K,L) &=&[G,G](K,L)+2g(K,GL)U-2g(K,HL)V \\
	&&+2(v(L)HK-v(K)HL)+\sigma (GL)HK \\
	&&-\sigma (GK)HL+\sigma (K)GHL-\sigma (L)GHK,\\
	T(K,L) &=&[H,H](K,L)-2g(K,GL)U+2g(K,HL)V \\
	&&+2(u(L)GK-u(K)GL)+\sigma (HK)GL \\
	&&-\sigma (HL)GK+\sigma (K)GHL-\sigma (L)GHK
\end{eqnarray*}
where 
\begin{equation*}
\lbrack G,G](K,L)=(\nabla _{GK}G)L-(\nabla _{GL}G)K-G(\nabla
_{K}G)L+G(\nabla _{L}G)K
\end{equation*}%
is the Nijenhuis torsion of $G$. Then they called an associated metric g
normal if $S=T=0.$ It is called IK-normality. In generally we consider whether or not the complex analogue of the real normal contact examples are IK-normal. The canonical example, complex Heisenberg group is not IK-normal \cite{BL2010}. Because it is not K\"ahler. In 2000 Korkmaz \cite{KB2000} gave a weaker definition.  
\begin{defn}
	$ M $ is normal if following two conditions are satisfied \cite{KB2000} : 
	\begin{enumerate}
		\item 	$S(K,L)=T(K,L)=0$ \ for all $K,L$ in $\mathcal{H},$ 
		\item  	$ S\left( K,U\right) =T\left( K,V\right) =0$ for all $K.$
		
	\end{enumerate}
\end{defn} 
A normal complex contact metric manifold is semi-K\"ahler and the complex Heisenberg group is normal. In this paper,  we use this notion of normality.

Korkmaz \cite{KB2000} obtained following equalities: 
\begin{align}
\nabla _{K}U &=-GK+\sigma (K)V,~~  \label{nabla KU} \\
\ \ \ \nabla _{K}V &=-HK-\sigma (K)U,~~  \label{nabla KV}\\
\nabla _{U}U &=\sigma (U)V,~~~\nabla _{U}V=-\sigma (U)U  \label{nablaUU} \\
\nabla _{V}U &=\sigma (V)V,~~~~\nabla _{V}V=-\sigma (V)U,~~  \notag \\
d\sigma (GK,GL) &=d\sigma (HK,HL)  \label{dsigma(GK,GL)} \\
&=d\sigma (L,K)-2u\wedge v(L,K)d\sigma (U,V).~  \notag
\end{align}
 
\begin{thm}\cite{KB2000}
	M is normal if and only if
	\begin{eqnarray}
	g((\nabla _{K}G)L,Z) &=&\sigma (K)g(HL,Z)+v(K)d\sigma (GZ,GL)
	\label{normallik (g(NablaXGY))} \\
	&&-2v(K)g(HGL,Z)-u(L)g(K,Z)  \notag \\
	&&-v(L)g(JK,Z)+u(Z)g(K,L)  \notag \\
	&&+v(Z)g(JK,L),  \notag\\
	g((\nabla _{K}H)L,Z) &=&-\sigma (K)g(GL,Z)-u(K)d\sigma (HZ,HL)
	\label{normallik (g(NablaXHY))} \\
	&&-2u(K)g(HGL,Z)+u(L)g(JK,Z)  \notag \\
	&&-v(L)g(K,Z)-u(Z)g(JK,L)  \notag \\
	&&+v(Z)g(K,L).  \notag
	\end{eqnarray}
\end{thm}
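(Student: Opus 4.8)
The plan is to prove this characterization of normality by expanding the vanishing conditions on the tensors $S$ and $T$ and translating them, via polarization/contraction arguments, into statements about $\nabla G$ and $\nabla H$. First I would recall that by definition $S$ and $T$ are built from the Nijenhuis torsions $[G,G]$ and $[H,H]$ together with lower-order terms involving $g$, $U$, $V$, $\sigma$, $G$, $H$; so the heart of the matter is to re-express $[G,G](K,L)$ in terms of the covariant derivatives $(\nabla_K G)L$. The standard identity is
\[
[G,G](K,L)=G(\nabla_L G)K-G(\nabla_K G)L-(\nabla_{GL}G)K+(\nabla_{GK}G)L,
\]
which is exactly the formula displayed in the excerpt; combining this with the relation $H=GJ$ and $GJ=-JG$ lets me pass between $\nabla G$ and $\nabla H$.

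The key steps, in order, would be: (1) Take $g(\,\cdot\,,Z)$ of the condition $S(K,L)=0$ and of $T(K,L)=0$, first for $K,L\in\mathcal H$ and then with $K=U$ or $K=V$, using the structure equations \eqref{G^2veH^2} and the relations among $u,v,U,V,G,H,J$ to simplify the algebraic terms. (2) Use the Levi-Civita compatibility $g((\nabla_K G)L,Z)=-g((\nabla_K G)Z,L)$ (which follows by differentiating $g(GX,Y)=-g(X,GY)$) to turn the $[G,G]$ expansion into something symmetric enough that the four covariant-derivative terms can be disentangled. (3) Solve the resulting linear system for the single quantity $g((\nabla_K G)L,Z)$, exploiting that one may cyclically permute or swap $K,L,Z$ and re-use the vanishing of $S,T$ on the relevant arguments (this is the usual Kählerian-type trick of adding/subtracting three versions of the identity). (4) Feed in the known derivatives $\nabla_K U=-GK+\sigma(K)V$ and $\nabla_K V=-HK-\sigma(K)U$ from \eqref{nabla KU}–\eqref{nabla KV} wherever $U$ or $V$ gets differentiated, and use \eqref{dsigma(GK,GL)} to recognize the $d\sigma(GZ,GL)$ terms. (5) For the converse, simply verify that the displayed formulas for $g((\nabla_K G)L,Z)$ and $g((\nabla_K H)L,Z)$, substituted back, force $S=T=0$ on $\mathcal H$ and $S(K,U)=T(K,V)=0$.

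The main obstacle I expect is step (3): isolating $g((\nabla_K G)L,Z)$ cleanly. Because $[G,G]$ mixes derivatives in the directions $K$, $L$, $GK$, $GL$, a naive expansion does not immediately give the covariant derivative in a single direction; one must combine $S(K,L)=0$ with $S$ evaluated on $G$-rotated arguments (and similarly for $T$), and carefully track how the $u,v,\sigma$ terms transform under $G$ and $J$ — in particular keeping the horizontal and vertical parts separate, since the hypothesis only gives $S=T=0$ on $\mathcal H$ plus the two special cases $S(K,U)=T(K,V)=0$. Managing the bookkeeping of these lower-order terms, and checking that the $d\sigma$ contributions assemble exactly into $v(K)d\sigma(GZ,GL)-2v(K)g(HGL,Z)$ via \eqref{dsigma(GK,GL)}, is where the real computational care is needed; the rest is routine tensor algebra using Definition \ref{complexalmostscontact}.
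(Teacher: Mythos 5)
This theorem is quoted from Korkmaz \cite{KB2000}; the paper under review gives no proof of it at all, so your proposal can only be measured against what a complete argument would require. Your outline names the right ingredients — expanding the conditions $S=T=0$, rewriting the Nijenhuis torsion $[G,G]$ in terms of $\nabla G$, using the skew-symmetry $g((\nabla_K G)L,Z)=-g((\nabla_K G)Z,L)$, and invoking \eqref{nabla KU}, \eqref{nabla KV} and \eqref{dsigma(GK,GL)} — and this is indeed the spirit of Korkmaz's original argument. But what you have written is a plan, not a proof. The step you yourself flag as the main obstacle, namely isolating the single quantity $g((\nabla_K G)L,Z)$ from the four derivative terms $(\nabla_{GK}G)L$, $(\nabla_{GL}G)K$, $G(\nabla_K G)L$, $G(\nabla_L G)K$ occurring in $[G,G](K,L)$, is precisely the computational content of the theorem, and you do not carry it out: you only assert that the ``usual K\"ahlerian-type trick'' of combining $G$-rotated versions of the identity should close up. Until that elimination is actually performed, and the $d\sigma$, $u$, $v$, $\sigma$ terms are shown to assemble into exactly the displayed right-hand sides, there is no proof here.

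A second, more structural concern: several of the auxiliary identities you propose to use, in particular \eqref{nabla KU}, \eqref{nabla KV} and \eqref{dsigma(GK,GL)}, are presented in the paper as consequences of normality. For the forward implication this is harmless, but in the converse direction (the displayed formulas for $g((\nabla_K G)L,Z)$ and $g((\nabla_K H)L,Z)$ imply $S=T=0$ on $\mathcal{H}$ and $S(K,U)=T(K,V)=0$) your step (4) risks circularity unless you either re-derive those identities from the hypothesized formulas or avoid them. You also need to make explicit how the restricted hypotheses ($S=T=0$ only for horizontal arguments, plus the two vertical conditions) control the vertical components that arise when $K$, $L$ or $Z$ has a $U$- or $V$-part; your sketch acknowledges this bookkeeping but does not do it.
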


Also from above proposition we have 
\begin{align}
g((\nabla _{K}J)L,Z) &=u(K)(d\sigma (Z,GL)-2g(HL,Z))
\label{normallik (g(NablaXJY))} \\
&+v(K)(d\sigma (Z,HL)+2g(GL,Z)).  \notag
\end{align}%
Ishihara and Konishi \cite{IK79} proved a normality condition by the term of he covariant derivatives of $ G $ and $ H $. 	In \cite{vanliricci} we obtain following theorem for a normal complex contact metric manifold . 
\begin{thm}
	M is normal if and only
	if the covariant derivative of \ $G$ and $H$ have the following
	forms: 
	\begin{eqnarray}
	(\nabla _{K}G)L &=&\sigma (K)HL-2v(K)JL-u\left( L\right) K
	\label{Yeni normallik G} \\
	&&-v(L)JK+v(K)\left( 2JL_{0}-\left( \nabla _{U}J\right) GL_{0}\right)  \notag
	\\
	&&+g(K,L)U+g(JK,L)V  \notag \\
	&&-d\sigma (U,V)v(K)\left( u(L)V-v(L)U\right) , \notag\\
	(\nabla _{K}H)L &=&-\sigma (K)GL+2u(K)JL+u(L)JK  \label{Yeni normallik H} \\
	&&-v(L)X+u(K)\left( -2JL_{0}-\left( \nabla _{U}J\right) GL_{0}\right)  \notag
	\\
	&&-g(JK,L)U+g(K,L)V  \notag \\
	&&+d\sigma (U,V)u(K)\left( u(L)V-v(L)U\right)  \notag
	\end{eqnarray}
	where $K=K_{0}+u(K)U+v(K)V$ and $L=L_{0}+u(L)U+v(L)V, K_{0},L_{0}\in $ $%
	\mathcal{H}.$
\end{thm}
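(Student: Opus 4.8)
The plan is to show the equivalence by transforming the stated $(\nabla_K G)L$ and $(\nabla_K H)L$ identities into the inner-product form \eqref{normallik (g(NablaXGY))}--\eqref{normallik (g(NablaXHY))} of the previous theorem, which we are allowed to assume characterises normality. So the proof reduces to a purely algebraic manipulation: starting from \eqref{Yeni normallik G}, take the $g$-inner product with an arbitrary $Z$ and check term-by-term that the result coincides with \eqref{normallik (g(NablaXGY))}, and similarly for $H$. The nontrivial input that makes this possible is the decomposition $K = K_0 + u(K)U + v(K)V$, $L = L_0 + u(L)U + v(L)V$ with $K_0, L_0 \in \mathcal{H}$, together with the structure relations from Definition~\ref{complexalmostscontact} and the Korkmaz formulas \eqref{nabla KU}--\eqref{dsigma(GK,GL)}.

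First I would record the bookkeeping identities needed to pass between the two forms: $GU = HU = 0$, $GV = HV = 0$ (so $G$ and $H$ kill the vertical part), $JU = -V$, $JV = U$, $H = GJ = -JG$, and the compatibility $g(GX,Y) = -g(X,GY)$, $g(HX,Y) = -g(X,HY)$, $g(JX,Y) = -g(X,JY)$. Then I would expand $(\nabla_K G)L = (\nabla_K G)L_0$ (since $\nabla_K G$ applied to the vertical directions is computed from \eqref{nabla KU}--\eqref{nabla KV} and reassembled), substitute $L = L_0 + u(L)U + v(L)V$ into the right-hand side of \eqref{Yeni normallik G}, and verify that the terms $g(K,L)U + g(JK,L)V$, the $-u(L)K - v(L)JK$ terms, and the $d\sigma(U,V)$-correction exactly reproduce what one gets from pairing \eqref{normallik (g(NablaXGY))} with the full $L$. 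The key reduction step is rewriting $d\sigma(GZ,GL) - 2g(HGL,Z)$ using \eqref{dsigma(GK,GL)}: namely $d\sigma(GZ,GL) = d\sigma(L,Z) - 2u\wedge v(L,Z)\,d\sigma(U,V)$, which converts the "intrinsic" $d\sigma(GZ,GL)$ appearing in \eqref{normallik (g(NablaXGY))} into the $d\sigma(U,V)$-term and the $2JL_0$, $(\nabla_U J)GL_0$ terms appearing in \eqref{Yeni normallik G}; here one uses that $(\nabla_U J)GL_0$ encodes $d\sigma$ via \eqref{normallik (g(NablaXJY))} evaluated at $K = U$.

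The main obstacle, and the step I expect to be most delicate, is matching the term $v(K)\bigl(2JL_0 - (\nabla_U J)GL_0\bigr)$ in \eqref{Yeni normallik G} against the combination $v(K)\,d\sigma(GZ,GL) - 2v(K)\,g(HGL,Z)$ in \eqref{normallik (g(NablaXGY))}. This requires computing $g\bigl((\nabla_U J)GL_0, Z\bigr)$ from \eqref{normallik (g(NablaXJY))} with $K$ replaced by $U$ — giving $d\sigma(Z, HGL_0) + 2g(GHGL_0, Z)$ or similar — and then reconciling the various $d\sigma$ arguments ($d\sigma(GZ,GL)$ versus $d\sigma(Z,HGL_0)$) via \eqref{dsigma(GK,GL)} and the identities $HG = -GH$, $G^2 L_0 = -L_0$ on $\mathcal{H}$. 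Careful tracking of signs and of which vector lies in $\mathcal{H}$ versus which has a vertical component is essential, since $G$ and $H$ behave differently on $U, V$ than on $\mathcal{H}$. Once the $G$-identity is established, the $H$-identity follows by the same computation with the substitution $G \leftrightarrow H$, $U \leftrightarrow -V$ (equivalently, by applying the $G$-result to $JL$ and using $H = GJ$), so I would only sketch that half and point to the symmetry. Finally, for the converse direction one simply reverses each algebraic step, so no separate argument is needed beyond remarking that every manipulation used is an equivalence.
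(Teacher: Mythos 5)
The paper itself offers no proof of this theorem: it is quoted verbatim from the authors' earlier work \cite{vanliricci}, so there is no in-text argument to compare yours against. That said, your route --- treating the displayed formulas as the metric duals of the characterisation \eqref{normallik (g(NablaXGY))}--\eqref{normallik (g(NablaXHY))} and verifying the match by pairing with an arbitrary $Z$ --- is the natural one, and you have correctly isolated where the real work lies: converting $v(K)\,d\sigma(GZ,GL)-2v(K)g(HGL,Z)$ into $v(K)\bigl(2JL_{0}-(\nabla_{U}J)GL_{0}\bigr)$ plus the $d\sigma(U,V)$ correction, using \eqref{dsigma(GK,GL)}, the evaluation of \eqref{normallik (g(NablaXJY))} at $K=U$, and the identity $HGL=JL-v(L)U+u(L)V$ coming from $G^{2}=-I+u\otimes U+v\otimes V$. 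Since $g$ is nondegenerate, establishing the scalar identity for all $Z$ does give the tensorial one, so the overall logic is sound.

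The one step you cannot wave through is the converse. You propose to ``reverse each algebraic step,'' but one of your forward steps invokes \eqref{normallik (g(NablaXJY))} to evaluate $g\bigl((\nabla_{U}J)GL_{0},Z\bigr)$, and that formula is itself a consequence of normality --- exactly what the converse is trying to prove. To avoid circularity you must first express $(\nabla_{U}J)GL_{0}$ directly from the hypotheses \eqref{Yeni normallik G}--\eqref{Yeni normallik H}, e.g.\ by differentiating $J=-GH$ on $\mathcal{H}$ (so that $(\nabla_{U}J)=-(\nabla_{U}G)H-G(\nabla_{U}H)$ up to vertical corrections) and substituting the assumed formulas at $K=U$; only then can you recover $d\sigma(GZ,GL)$ and conclude \eqref{normallik (g(NablaXGY))}. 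Two smaller cautions: the term $-v(L)X$ in \eqref{Yeni normallik H} is a typo for $-v(L)K$, which you should fix before dualising, and the claimed symmetry sending the $G$-identity to the $H$-identity needs the accompanying sign changes in $\sigma$ and in the $d\sigma(U,V)$ term checked explicitly rather than asserted.
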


From this theorem we have

\begin{align*}
(\nabla _{K}J)L &=-2u\left(K\right) HL+2v(K)GL+u(K)\left( 2HL_{0}+\left(
\nabla _{U}J\right) L_{0}\right) \\
&+v(K)\left( -2GL_{0}+\left( \nabla _{U}J\right) JL_{0}\right) .
\end{align*}

\section{Fundamental Facts on Submanifolds of Normal Complex Contact Manifolds}
Let $ (\bar{M}^{4m+2}, \bar{G}, \bar{H}, \bar{J}, \bar{U}, \bar{V}, \bar{u}, \bar{v}, \bar{g} ) $ be a normal complex contact metric manifold, $ M $ be a $ (n+2)-$real dimensional complex submanifold of $ \bar{M} $  and $ \bar{U}, \bar{V} $ be tangent to $M$, where $ n $ must be even. The Gauss formula is given by 
\begin{eqnarray}
\bar{\nabla}_{K}L=\nabla_{K}L+\mathbf{h}(K,L) \label{gaussdenk}.
\end{eqnarray}
$ \mathbf{h}  $ is called the second fundamental form, and it is defined by:
\begin{equation*}\label{ikincitemelformacikhal}
\mathbf{h}(K,L) =\sum_{\alpha=1}^{r}(h^{\alpha}(K,L)N_{\alpha}+k^{\alpha}(K,L)\bar{J}N_{\alpha}).
\end{equation*}
where $ r=\frac{4m-n}{2} $. We have the  Wiengarten formulas which are given by 
\begin{eqnarray}
\bar{\nabla}_{K}N=-A_{N}K+\nabla^{\bot}_{K}N\label{weingarthen1}\\
\bar{\nabla}_{K}\bar{J}N=-A_{\bar{J}N}K+\nabla^{\bot}_{K}\bar{J}N \label{weingarthen2}
\end{eqnarray}
where $ A_{N} $ and $ A_{\bar{J}N} $ are fundamental forms related to $ N $ and $ \bar{J}N $. Also for $ s^{\alpha}(K) ,  t^{\alpha}(K)$ and $ \tilde{s}^{\alpha}(K) ,  \tilde{t}^{\alpha}(K)$ coefficients  
\begin{eqnarray*}
	\nabla^{\bot}_{K}N=\sum_{\alpha=1}^{r} \left(s^{\alpha}(K)N_{\alpha}+ t^{\alpha}(K)\bar{J}N_{\alpha} \right) \  \text{and}\  
	\nabla^{\bot}_{K}\bar{J}N=\sum_{\alpha=1}^{r}\left(\tilde{s}^{\alpha}(K)N_{\alpha}+ \tilde{t}^{\alpha}(K)\bar{J}N_{\alpha})\right.
\end{eqnarray*}
$ \bar{\nabla} , \nabla$ and $\nabla^{\bot}$ are the Riemannian, induced connection and induced normal connections on $ \bar{M} $, $ M $ and the normal bundle $ TM^{\bot}$ of $ M $, respectively. By easy computation we get following result. 
\begin{cor}
	For any  $ K,L \in \Gamma(TM) $ and $ N\in \Gamma(TM^{\bot}) $ we have  $ 
	\bar{g}(\mathbf{h}(K,L),N)=\bar{g}(A_{N}K,L). $
\end{cor}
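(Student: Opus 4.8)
The plan is to exploit metric compatibility of the ambient Levi--Civita connection $\bar\nabla$ together with the orthogonal decomposition $T\bar M|_M = TM \oplus TM^{\bot}$. The starting point is the observation that, since $L\in\Gamma(TM)$ and $N\in\Gamma(TM^{\bot})$, the function $\bar g(L,N)$ vanishes identically on $M$, hence so does its derivative along any $K\in\Gamma(TM)$. Writing out $K\bigl(\bar g(L,N)\bigr)=0$ via $\bar\nabla\bar g=0$ gives
\begin{equation*}
\bar g(\bar\nabla_{K}L,N)+\bar g(L,\bar\nabla_{K}N)=0.
\end{equation*}

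Next I would substitute the Gauss formula \eqref{gaussdenk} into the first term and the Weingarten formula \eqref{weingarthen1} into the second, obtaining
\begin{equation*}
\bar g\bigl(\nabla_{K}L+\mathbf{h}(K,L),N\bigr)+\bar g\bigl(L,-A_{N}K+\nabla^{\bot}_{K}N\bigr)=0.
\end{equation*}
Then I would discard the terms that vanish by orthogonality: $\nabla_{K}L\in\Gamma(TM)$ is orthogonal to $N\in\Gamma(TM^{\bot})$, and $\nabla^{\bot}_{K}N\in\Gamma(TM^{\bot})$ is orthogonal to $L\in\Gamma(TM)$. What survives is $\bar g(\mathbf{h}(K,L),N)-\bar g(A_{N}K,L)=0$, which is exactly the claimed identity; note $A_{N}K\in\Gamma(TM)$ so the right-hand side is a legitimate tangential pairing.

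There is essentially no obstacle here: the argument is the standard derivation of the relation between the second fundamental form and the shape operator, transcribed to the present notation where the normal bundle is spanned by the $N_\alpha$ and $\bar J N_\alpha$. The only point requiring a line of care is making sure every connection used ($\bar\nabla$, $\nabla$, $\nabla^{\bot}$) is metric with respect to the appropriate metric, which is guaranteed since $\nabla$ and $\nabla^{\bot}$ are the induced connections of the Riemannian $\bar\nabla$ on the tangent and normal bundles of $M$.
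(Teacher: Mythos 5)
Your argument is correct and is precisely the standard ``easy computation'' the paper invokes without writing out: differentiate $\bar g(L,N)=0$ using metric compatibility of $\bar\nabla$, insert the Gauss and Weingarten formulas, and kill the cross terms by orthogonality of $TM$ and $TM^{\bot}$. No issues.
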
 
The mean curvature $ \mu $ of $ M $ is defined by $ \mu= \frac{trace\ \mathbf{h}}{dim\ M} $. 
$ M $ is a totally umbilical submanifold if  
\begin{equation}\label{totally umbl}
\mathbf{h}(K,L)=g(K,L)\mu 
\end{equation} for all $ K, L \in \Gamma(TM)$ .  \\
Since $ \bar{U},\bar{V} \in  \Gamma(TM) $ we can write $ TM=sp\{\bar{U}, \bar{V}\}\oplus sp\{\bar{U},\bar{V}\}^{\bot} $
where $ sp\{\bar{U}, \bar{V}\} $ is the distribution spanned
by $ \bar{U}, \bar{V}\ $ and $ sp\{\bar{U},\bar{V}\}^{\bot} $ is the complementary orthogonal distribution of $ sp\{\bar{U}, \bar{V}\} $ in M.  Then for any vector field  $ K $ is tangent to $ M $ we have  
 $ \bar{G}K\in sp\{\bar{U},\bar{V}\}^{\bot} $ and $\bar{H}K\in sp\{\bar{U},\bar{V}\}^{\bot} $.\\
Let define following projections; 
\begin{equation*}
P:\Gamma(TM)\rightarrow \Gamma(TM) \ \ ,\ \ Q:\Gamma(TM)\rightarrow \Gamma(TM)^{\bot} .
\end{equation*}
Then we can write 
\begin{equation} \label{GXifadesi}
\bar{G}K=PK+QK
\end{equation}
where $PK$ and $QK$ are the tangential and normal part of $ \bar{G}K $, respectively. Since $\bar{H}=\bar{G}\bar{J} $ we have 
\begin{equation} \label{HXifadesi}
\bar{H}K=P\bar{J}K+Q\bar{J}K.
\end{equation} 
Defined in this way $ P $ is an isomorphism on $\Gamma(TM)$ and $ Q $ is a normal valued 1-form on $\Gamma(TM)$. Therefore one can define two distributions for  $p \in M$ as follows 
\begin{eqnarray*}
	\mathcal{D}_{p}=ker\{Q\lvert_{sp\{\bar{U},\bar{V}\}^{\bot}}\}=\{K_{p} \in sp\{\bar{U},\bar{V}\}^{\bot}:Q(K_{p})=0\}\\
	\mathcal{D}^{\bot}_{p}=ker\{P\lvert_{sp\{\bar{U},\bar{V}\}^{\bot}}\}=\{K_{p} \in sp\{\bar{U},\bar{V}\}^{\bot}:P(K_{p})=0\}.
\end{eqnarray*}
The following result is directly obtained from the definition of $ \mathcal{D}_{p} $ and $ \mathcal{D}^{\bot}_{p} $.
\begin{prop}
	$ \mathcal{D}_{p} $ ve $ \mathcal{D}^{\bot}_{p} $ are orthogonal subspaces of $ T_{p}M $.
\end{prop}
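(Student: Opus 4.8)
The plan is to exploit two structural facts about $\bar{G}$: the algebraic identity $\bar{G}^{2}=-I+\bar{u}\otimes\bar{U}+\bar{v}\otimes\bar{V}$ coming from (\ref{G^2veH^2}), and the skew-symmetry $\bar{g}(\bar{G}K,L)=-\bar{g}(K,\bar{G}L)$. First I would note that any $K_{p}\in sp\{\bar{U},\bar{V}\}^{\bot}$ satisfies $\bar{u}(K_{p})=\bar{g}(K_{p},\bar{U})=0$ and $\bar{v}(K_{p})=\bar{g}(K_{p},\bar{V})=0$, since $\bar{u},\bar{v}$ are the metric duals of $\bar{U},\bar{V}$; consequently $\bar{G}^{2}K_{p}=-K_{p}$ for every vector in this subbundle, in particular for vectors lying in $\mathcal{D}_{p}$ or in $\mathcal{D}^{\bot}_{p}$.

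Next, I would take arbitrary $X\in\mathcal{D}_{p}$ and $Y\in\mathcal{D}^{\bot}_{p}$ and compute $\bar{g}(X,Y)=-\bar{g}(\bar{G}^{2}X,Y)=-\bar{g}(\bar{G}(\bar{G}X),Y)$, then transfer one factor of $\bar{G}$ across by skew-symmetry to get $\bar{g}(X,Y)=\bar{g}(\bar{G}X,\bar{G}Y)$. By the definition of $\mathcal{D}_{p}$ we have $Q(X)=0$, hence $\bar{G}X=PX\in\Gamma(TM)$; by the definition of $\mathcal{D}^{\bot}_{p}$ we have $P(Y)=0$, hence $\bar{G}Y=QY\in\Gamma(TM^{\bot})$. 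A tangent vector is orthogonal to a normal vector, so $\bar{g}(\bar{G}X,\bar{G}Y)=\bar{g}(PX,QY)=0$, and therefore $\bar{g}(X,Y)=0$. Since $X,Y$ were arbitrary, $\mathcal{D}_{p}$ and $\mathcal{D}^{\bot}_{p}$ are orthogonal, which is the claim.

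I do not anticipate a genuine obstacle; the only step needing a moment's attention is the reduction $\bar{G}^{2}K_{p}=-K_{p}$, which uses crucially that $K_{p}\in sp\{\bar{U},\bar{V}\}^{\bot}$ so that the $\bar{u}\otimes\bar{U}$ and $\bar{v}\otimes\bar{V}$ terms drop out — this is precisely why both distributions were defined inside $sp\{\bar{U},\bar{V}\}^{\bot}$ rather than inside all of $T_{p}M$. A slightly different route avoids $\bar{G}^{2}$ altogether: one first checks that $\bar{G}$ preserves $sp\{\bar{U},\bar{V}\}^{\bot}$ (indeed $\bar{G}\bar{U}=0$ and $\bar{G}\bar{V}=-\bar{G}\bar{J}\bar{U}=\bar{J}\bar{G}\bar{U}=0$), hence $\mathcal{D}_{p}$ is $\bar{G}$-invariant, then writes $X=\bar{G}X'$ with $X'\in\mathcal{D}_{p}$ and applies skew-symmetry once to $\bar{g}(\bar{G}X',Y)$; both arguments are short and equivalent.
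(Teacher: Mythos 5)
Your argument is correct: the reduction $\bar{G}^{2}K_{p}=-K_{p}$ on $sp\{\bar{U},\bar{V}\}^{\bot}$ via (\ref{G^2veH^2}), followed by one application of the skew-symmetry $\bar{g}(\bar{G}K,L)=-\bar{g}(K,\bar{G}L)$, gives $\bar{g}(X,Y)=\bar{g}(PX,QY)=0$ exactly as needed. The paper states this proposition without proof, asserting it is ``directly obtained from the definition'' of $\mathcal{D}_{p}$ and $\mathcal{D}^{\bot}_{p}$, and your computation is precisely the direct verification being alluded to, so there is nothing further to compare.
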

On the other hand for any vector field $ N $ normal to $M$ we put 
\begin{equation} \label{GNifadesi}
\bar{G}N=BN+CN
\end{equation}
and 
\begin{equation} \label{HNifadesi}
\bar{H}N=B\bar{J}N+C\bar{J}N.
\end{equation} 
where $ BN,\ B\bar{J}N$ are tangential parts and $CN, \ C\bar{J}N$  are normal parts of $ \bar{G}N, \ \bar{H}N$, respectively. Therefore we have projections 
\begin{eqnarray*}
	B:\Gamma(TM^{\bot}) \rightarrow \Gamma(TM)\ and \ 	C:\Gamma(TM^{\bot}) \rightarrow \Gamma(TM^{\bot}).
\end{eqnarray*}

\section{Semi-invariant Submanifolds of Normal Complex Contact Metric Manifolds }
CR-submanifolds are important classes of complex submanifold theory. Similar to the definition of CR-submanifold, a semi-invariant submanifold of a Sasakian manifold was defined by Bejancu and Papaghuic \cite{bejancualmost}. We give an analogue definition for complex contact case.
\begin{defn} \label{semitanm1}
	Let $ (\bar{M}^{4m+2}, \bar{G}, \bar{H}, \bar{J}, \bar{U}, \bar{V}, \bar{u}, \bar{v}, \bar{g} ) $ be a normal complex contact metric manifold, and $ M $ be a complex submanifold of $ \bar{M} $. If the dimensions of $ \mathcal{D}_{p} $ and $ \mathcal{D}^{\bot}_{p} $ are constant along to $ M $ and  
	\begin{eqnarray*}
		\mathcal{D}:p\rightarrow \mathcal{D}_{p} \ \ ,\ \ 
		\mathcal{D}^{\bot}:p\rightarrow \mathcal{D}_{p}^{\bot}
	\end{eqnarray*}
	are differentiable then $ M $ is called a semi-invariant submanifold of $ \bar{M}$.
\end{defn}
Bejancu and Papaghuic proved two results (Proposition 1.1 and Proposition 1.2 in \cite{bejancualmost}) about invariance of these distributions. Similarly we obtain following propositions for complex contact case.
\begin{prop} \label{onrme1}
	The distribution $ \mathcal{D} $ is the maximal invariant distribution in $ sp\{\bar{U}, \bar{V}\}^{\bot} $; that is, we have  
	\begin{enumerate}
		\item $ \bar{G}\mathcal{D}_{p}=\bar{H}\mathcal{D}_{p}=\mathcal{D}_{p}$  
		\item If $ \mathcal{D}'_{p}\subset  $ $ sp\{\bar{U}, \bar{V}\}^{\bot} $ and $ \bar{G}\mathcal{D}'_{p}= \bar{H}\mathcal{D}'_{p}=\mathcal{D}'_{p} $ then we have $ \mathcal{D}'_{p}\subset \mathcal{D}_{p} $ .
	\end{enumerate}
\end{prop}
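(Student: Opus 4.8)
The plan is to handle the two items separately: item (1) reduces to a short computation with the structure identity \eqref{G^2veH^2} together with the auxiliary facts $\bar G\bar U=\bar G\bar V=\bar H\bar U=\bar H\bar V=0$, while item (2) is an essentially formal consequence of the very definition of $\mathcal{D}_p$.

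For (1), I would begin with an arbitrary $K\in\mathcal{D}_p$, so that $K\in sp\{\bar U,\bar V\}^{\bot}$ and, by \eqref{GXifadesi}, $\bar GK=PK\in\Gamma(TM)$ because $QK=0$. Next I would invoke the observation recorded in Section~3 that $\bar GK$ is orthogonal to both $\bar U$ and $\bar V$ for every $K$ tangent to $M$ (which in any case follows from $\bar G\bar U=\bar G\bar V=0$ and the skew-symmetry $\bar g(\bar GX,Y)=-\bar g(X,\bar GY)$), so that in fact $\bar GK\in sp\{\bar U,\bar V\}^{\bot}$. To conclude $\bar GK\in\mathcal{D}_p$ it then suffices to check $Q(\bar GK)=0$, i.e. that $\bar G(\bar GK)$ is tangent to $M$; but $\bar u(K)=\bar v(K)=0$, so \eqref{G^2veH^2} gives $\bar G^{2}K=-K\in\Gamma(TM)$, which settles it. Hence $\bar G\mathcal{D}_p\subseteq\mathcal{D}_p$, and surjectivity is immediate from $K=-\bar G^{2}K=\bar G(-\bar GK)$ with $-\bar GK\in\mathcal{D}_p$, so $\bar G\mathcal{D}_p=\mathcal{D}_p$. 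For $\bar H$ I would argue in exactly the same way, using $\bar H\bar U=\bar H\bar V=0$ and $\bar H^{2}=-I$ on $sp\{\bar U,\bar V\}^{\bot}$ (again from \eqref{G^2veH^2}); alternatively, since $M$ is a complex submanifold one can show $\bar J\mathcal{D}_p=\mathcal{D}_p$ by the same computation and then deduce $\bar H\mathcal{D}_p=\bar G\bar J\mathcal{D}_p=\bar G\mathcal{D}_p=\mathcal{D}_p$ from $\bar H=\bar G\bar J$.

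For (2), let $\mathcal{D}'_p\subseteq sp\{\bar U,\bar V\}^{\bot}$ satisfy $\bar G\mathcal{D}'_p=\bar H\mathcal{D}'_p=\mathcal{D}'_p$, and take $K\in\mathcal{D}'_p$. Then $\bar GK\in\bar G\mathcal{D}'_p=\mathcal{D}'_p\subseteq sp\{\bar U,\bar V\}^{\bot}\subseteq\Gamma(TM)$, so $\bar GK$ is tangent to $M$; comparing with the decomposition $\bar GK=PK+QK$ of \eqref{GXifadesi} forces $QK=0$. Since also $K\in sp\{\bar U,\bar V\}^{\bot}$, the definition of $\mathcal{D}_p$ gives $K\in\mathcal{D}_p$, whence $\mathcal{D}'_p\subseteq\mathcal{D}_p$.

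I do not anticipate a genuine obstacle; the only points that need care are bookkeeping about which orthogonal complement is meant — everything takes place inside $T_pM$, with $sp\{\bar U,\bar V\}^{\bot}$ the complement taken in $M$ — and collecting the auxiliary identities $\bar G\bar U=\bar G\bar V=\bar H\bar U=\bar H\bar V=0$, which can be derived once at the outset from $\bar G\bar U=0$ and $\bar G\bar J=-\bar J\bar G$ in Definition~\ref{complexalmostscontact}.
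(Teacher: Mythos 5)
Your argument is correct, and it is essentially the canonical one; note that the paper itself gives no proof of this proposition at all --- it is simply asserted by analogy with Propositions 1.1 and 1.2 of Bejancu--Papaghiuc, so there is nothing in the text to compare against, and supplying the details as you do is exactly what is needed. Both items check out: for (1), the chain ``$K\in\mathcal{D}_p\Rightarrow \bar{G}K$ tangent $\Rightarrow \bar{G}K\perp\bar{U},\bar{V}$ (skew-symmetry of $\bar{G}$ plus $\bar{G}\bar{U}=\bar{G}\bar{V}=0$) $\Rightarrow Q(\bar{G}K)=0$ since $\bar{G}^{2}K=-K$ by \eqref{G^2veH^2}'' is sound, and (2) is indeed immediate from \eqref{GXifadesi}. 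The one place where your phrasing papers over a real step is the claim that $\bar{H}$ can be handled ``in exactly the same way'': membership in $\mathcal{D}_p$ is defined through $Q$, the normal part of $\bar{G}$, so before you can run the argument for $\bar{H}$ you must first know that $\bar{H}K=P\bar{J}K+Q\bar{J}K$ is tangent at all, i.e.\ that $Q\bar{J}K=0$ --- and that is not given a priori. Your alternative route supplies precisely this: $\bar{J}K$ is tangent because $M$ is a complex submanifold, $\bar{G}\bar{J}K=-\bar{J}\bar{G}K$ is tangent because $\bar{G}K$ is, hence $\bar{J}\mathcal{D}_p=\mathcal{D}_p$, and then $\bar{H}\mathcal{D}_p=\bar{G}\bar{J}\mathcal{D}_p=\mathcal{D}_p$. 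I would present that as \emph{the} argument for $\bar{H}$ rather than as an aside, since the ``same computation'' version is not self-contained. With that reordering the proof is complete.
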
 
\begin{prop} \label{onrm2}
	The distribution	$ \mathcal{D}^{\bot}_{p}   $  is the maximal anti-invariant distribution in $ sp\{\bar{U}, \bar{V}\}^{\bot} $; that is, we have  
	\begin{enumerate}
		\item $ \bar{G}\mathcal{D}^{\bot}_{p} \subset T^{\bot}_{p}M$  , $ \bar{H}\mathcal{D}^{\bot}_{p} \subset T^{\bot}_{p}M$
		\item If $ \mathcal{D}''_{p}\subset  $ $ sp\{\bar{U}, \bar{V}\}^{\bot} $ and $ \bar{G}\mathcal{D}''_{p}\subset T^{\bot}_{p}M $ , $ \bar{H}\mathcal{D}''_{p}\subset T^{\bot}_{p}M $ then we have  $ \mathcal{D}''_{p} \subset \mathcal{D}^{\bot}_{p} $ for any $ p\in M $.
	\end{enumerate}
\end{prop}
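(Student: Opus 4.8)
The plan is to mirror the proof of Proposition \ref{onrme1}, now with the roles of "tangential" and "normal" interchanged. Two preliminary observations do all the work. First, since $M$ is a complex submanifold, $\bar{J}$ maps $\Gamma(TM)$ into itself, and being a $\bar{g}$-isometry it maps $\Gamma(TM^{\bot})$ into itself as well; moreover, from $\bar{J}\bar{U}=-\bar{V}$, $\bar{J}\bar{V}=\bar{U}$ together with the isometry property, $\bar{J}$ preserves $sp\{\bar{U},\bar{V}\}$ and hence its orthogonal complement $sp\{\bar{U},\bar{V}\}^{\bot}$. Second, from Definition \ref{complexalmostscontact} we have $\bar{H}=\bar{G}\bar{J}=-\bar{J}\bar{G}$, and (as recorded just before \eqref{GXifadesi}) $\bar{G}K,\bar{H}K\in sp\{\bar{U},\bar{V}\}^{\bot}$ for every $K$ tangent to $M$; in particular, in $\bar{G}K=PK+QK$ the tangential part $PK$ always lies in $sp\{\bar{U},\bar{V}\}^{\bot}$. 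I will repeatedly invoke uniqueness of the decomposition of a vector of $T_p\bar{M}$ restricted to $M$ into its $T_pM$ and $T^{\bot}_pM$ parts.

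For item (1): let $K_p\in\mathcal{D}^{\bot}_p$, so by definition $P(K_p)=0$ and hence $\bar{G}K_p=Q(K_p)\in T^{\bot}_pM$. For the $\bar{H}$-statement I would first check that $\mathcal{D}^{\bot}_p$ is $\bar{J}$-invariant: since $\bar{J}K_p\in sp\{\bar{U},\bar{V}\}^{\bot}$, we may write $\bar{G}(\bar{J}K_p)=P(\bar{J}K_p)+Q(\bar{J}K_p)$; but $\bar{G}(\bar{J}K_p)=\bar{H}K_p=-\bar{J}\bar{G}K_p=-\bar{J}Q(K_p)\in T^{\bot}_pM$ by the isometry argument above, so uniqueness of the decomposition forces $P(\bar{J}K_p)=0$, i.e. $\bar{J}K_p\in\mathcal{D}^{\bot}_p$. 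Then $\bar{H}K_p=\bar{G}(\bar{J}K_p)=Q(\bar{J}K_p)\in T^{\bot}_pM$, giving $\bar{H}\mathcal{D}^{\bot}_p\subset T^{\bot}_pM$. (Equivalently, one reads this straight off from $\bar{H}K_p=-\bar{J}Q(K_p)$.)

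For item (2), the maximality: suppose $\mathcal{D}''_p\subset sp\{\bar{U},\bar{V}\}^{\bot}$ satisfies $\bar{G}\mathcal{D}''_p\subset T^{\bot}_pM$ and $\bar{H}\mathcal{D}''_p\subset T^{\bot}_pM$, and let $K_p\in\mathcal{D}''_p$. Then $\bar{G}K_p\in T^{\bot}_pM$, while on the other hand $\bar{G}K_p=P(K_p)+Q(K_p)$ with $P(K_p)\in sp\{\bar{U},\bar{V}\}^{\bot}\subset\Gamma(TM)$ and $Q(K_p)\in T^{\bot}_pM$. Comparing the two expressions for $\bar{G}K_p$ and using uniqueness of the tangential/normal decomposition yields $P(K_p)=0$, so $K_p\in\ker\{P|_{sp\{\bar{U},\bar{V}\}^{\bot}}\}=\mathcal{D}^{\bot}_p$. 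Hence $\mathcal{D}''_p\subset\mathcal{D}^{\bot}_p$ for every $p\in M$. (The $\bar{H}$-hypothesis is in fact redundant for this implication; it is retained only to match the symmetric form of the statement.)

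The argument is essentially formal once the invariance of $TM$, $TM^{\bot}$ and $sp\{\bar{U},\bar{V}\}^{\bot}$ under $\bar{J}$ is in place, so there is no real obstacle; the only step deserving care is the bookkeeping around uniqueness of the tangential–normal splitting, in particular the point that $PK$ never contributes to $sp\{\bar{U},\bar{V}\}$, which is precisely what makes $\mathcal{D}^{\bot}_p$ a well-defined kernel. This is the exact analogue of Proposition 1.2 of \cite{bejancualmost}, with the pair $(\bar{G},\bar{H})$ playing the role of the Sasakian structure tensor.
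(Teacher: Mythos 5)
Your proof is correct, and it is essentially the argument the paper intends: the paper omits the proof entirely, remarking only that the result follows as in Proposition 1.2 of Bejancu--Papaghuic, and your formal unpacking of the definitions of $P$, $Q$ and the tangential--normal splitting (together with $\bar{H}=-\bar{J}\bar{G}$ and the $\bar{J}$-invariance of $TM$, $TM^{\bot}$ and $sp\{\bar{U},\bar{V}\}^{\bot}$) is exactly that argument. Your observation that the $\bar{H}$-hypothesis is redundant in item (2) is also accurate.
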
 
In real Sasakian geometry, Bejancu and Papaghuic \cite{bejancusemi} gave an equivalent definition by using invariance of $ \mathcal{D}_{p},\ \mathcal{D}^{\bot}_{p} $. Similarly by considering the Preposition \ref{onrme1} and Preposition \ref{onrm2} we get an equivalent definition to Definition \ref{semitanm1}.
\begin{defn}
	Let $(\bar {M}^{4m+2},\bar{G}, \bar{H}, \bar{U}, \bar{V}, \bar{u}, \bar{v},g) $ be a normal complex contact metric manifold, $ M^{n+2} $ be a complex  submanifold of $\bar{M} $ and $ \bar{U} ,\bar{V} $ be tangent to $ M $. If following conditions are satisfied then $ M $ is called a semi-invariant submanifold.
	\begin{enumerate}
		\item $ TM=\mathcal{D}\oplus \mathcal{D}^{\bot}\oplus sp\{\bar{U},\bar{V}\} $.
		\item The distribution $ \mathcal{D} $ is invariant by $ \bar{G} $ and $ \bar{H} $; that is, $ \bar{G}\mathcal{D}=\mathcal{D} $ and  $ \bar{H}\mathcal{D}=\mathcal{D}$ .
		\item The distribution $ \mathcal{D}^{\bot} $ is anti-invariant by $ \bar{G} $ and $ \bar{H} $; that is, $\bar{G}\mathcal{D}^{\bot} \subset TM^{\bot} $  and $\bar{H}\mathcal{D}^{\bot}\subset {TM^{\bot}} $.
		
	\end{enumerate}
	
\end{defn}
Since  $ \bar{G}\bar{H}K=\bar{J}K $, for any vector field $ K $ in $ \Gamma(\mathcal{D}) $ or $ \Gamma(\mathcal{D}^{\bot})$ above conditions are also satisfied for $ \bar{J} $.\par 
Let $ M $  be a semi-invariant submanifold of a normal complex contact metric manifold $ \bar{M} $. If $ dim\mathcal{D}=0 $ then $ M $ is called an anti-invariant submanifold of $ \bar{M} $, and if $ dim\mathcal{D}^{\bot}=0 $ then $ M $ is called an invariant submanifold of $ \bar{M} $. If $ \bar{G}\mathcal{D}^{\bot}= \bar{H}\mathcal{D}^{\bot}=TM^{\bot}$, then $ M $ is called generic submanifold of $ \bar{M} $ \cite{yanoCRkitap}.\par
For a semi-invariant submanifold $ M $ of a normal complex contact metric manifold  $ \bar{M} $, the projection morphisms of $ TM $ to $ \mathcal{D} $ and $ \mathcal{D}^{\bot} $ are denoted by $ \phi $ and $ \psi $, respectively . Then for all $ K\in \Gamma(TM) $ we can write 
\begin{equation} \label{altmanifoldX}
K=\phi K+ \psi K+ \bar{u}(K)\bar{U}+\bar{v}(K)\bar{V}
\end{equation}
where $ \phi K$ and $ \psi K $ are tangential and normal parts of $ K $, respectively. 
Also we have
\begin{equation}\label{altmanifoldJX}
\bar{J}K=\phi \bar{J} K+ \psi \bar{J}K+ \bar{v}(K)\bar{U}-\bar{u}(K)\bar{V}.
\end{equation}
Similarly for $ N, \bar{J}N\in TM^{\bot} $ we have 
\begin{equation*}
N=t N+ f N \ \ \text{and} \ \ \bar{J}N=t\bar{J}N+f\bar{J}N
\end{equation*}
where $ t N ,t\bar{J}N$ is tangential part , and $ fN, f\bar{J}N $ is the normal part of $ N, \bar{J}N $. On the other hand from (\ref{GNifadesi}) and (\ref{HNifadesi}) we have $ BN \in \Gamma(\mathcal{D}^\bot), B\bar{J}N \in \Gamma(\mathcal{D}^\bot) $,  $ CN \in \Gamma(TM^{\bot}) and C\bar{J}N \in \Gamma(TM^{\bot}) $. Thus we obtain an $ f-$structure on the normal bundle by following same steps with the proof of Proposition 1.3 in \cite{bejancusemi}.\\\\  
From now on we will denote a semi-invariant submanifold of  a normal complex contact metric manifold by M.
 
\begin{prop}
	On the normal bundle of $M$ there exist an $ f- $structure $ C $.
\end{prop}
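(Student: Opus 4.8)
The plan is to check that the normal-bundle endomorphism $C$ already introduced in $(\ref{GNifadesi})$ satisfies $C^{3}+C=0$ with constant rank, which is precisely what it means for $C$ to be an $f$-structure on $TM^{\bot}$. Everything comes from computing $\bar{G}^{2}$ on a normal vector field in two different ways and then separating tangential from normal components.

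First I would fix $N\in\Gamma(TM^{\bot})$. Since $\bar{U},\bar{V}$ are tangent to $M$, we have $\bar{u}(N)=\bar{g}(N,\bar{U})=0$ and $\bar{v}(N)=\bar{g}(N,\bar{V})=0$, so $(\ref{G^2veH^2})$ gives $\bar{G}^{2}N=-N$. On the other hand, writing $\bar{G}N=BN+CN$ and applying $\bar{G}$ once more, together with $(\ref{GNifadesi})$ used on the normal field $CN$,
\[
\bar{G}^{2}N=\bar{G}(BN)+\bar{G}(CN)=\bar{G}(BN)+BCN+C^{2}N .
\]
Here $BN\in\Gamma(\mathcal{D}^{\bot})$, and since $\mathcal{D}^{\bot}$ is anti-invariant, $\bar{G}(BN)\in\Gamma(TM^{\bot})$; also $BCN\in\Gamma(\mathcal{D}^{\bot})\subset\Gamma(TM)$ and $C^{2}N\in\Gamma(TM^{\bot})$. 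Comparing the tangential and normal parts of $-N=\bar{G}(BN)+BCN+C^{2}N$ therefore yields
\[
BCN=0,\qquad C^{2}N=-N-\bar{G}(BN).
\]

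Next I would apply $C$ to the second identity to get $C^{3}N=-CN-C(\bar{G}(BN))$, so it is enough to prove $C(\bar{G}(BN))=0$. Now $\bar{G}(BN)\in\Gamma(TM^{\bot})$, so by the very definition of $C$ the vector $C(\bar{G}(BN))$ is the normal component of $\bar{G}\bigl(\bar{G}(BN)\bigr)=\bar{G}^{2}(BN)$. But $BN\in\Gamma(\mathcal{D}^{\bot})\subset\Gamma(sp\{\bar{U},\bar{V}\}^{\bot})$, hence $\bar{u}(BN)=\bar{v}(BN)=0$ and $\bar{G}^{2}(BN)=-BN\in\Gamma(TM)$ is purely tangential; its normal component vanishes, so $C(\bar{G}(BN))=0$ and consequently $C^{3}N+CN=0$. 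For the rank: since $M$ is semi-invariant, $\dim\mathcal{D}^{\bot}$ is constant along $M$, and one checks that $\ker C=\bar{G}\mathcal{D}^{\bot}$ (the inclusion $\supseteq$ is the computation just made, and $\subseteq$ follows by decomposing $\bar{G}N$ along $\mathcal{D}\oplus\mathcal{D}^{\bot}$ for $N\in\ker C$), so $C$ has constant rank. Hence $C$ is an $f$-structure on the normal bundle, and through $(\ref{HNifadesi})$ — recall $\bar{H}N=\bar{G}\bar{J}N$ — the same $C$ also governs $\bar{H}$, so nothing further is required.

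I do not anticipate a real obstacle: the computation is short provided one is disciplined about which summands are tangential and which are normal, and in particular uses $BN\in\mathcal{D}^{\bot}$ — not merely $BN\in TM$ — so that $\bar{G}(BN)$ is purely normal while $\bar{G}^{2}(BN)=-BN$ is purely tangential. The only slightly delicate point is the constant-rank clause, which needs the identification of $\ker C$ with $\bar{G}\mathcal{D}^{\bot}$ together with the fact, built into the definition of a semi-invariant submanifold, that $\dim\mathcal{D}^{\bot}$ is constant.
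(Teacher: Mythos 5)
Your computation is correct: splitting $\bar{G}^{2}N=-N$ into tangential and normal parts via $(\ref{GNifadesi})$, using $BN\in\Gamma(\mathcal{D}^{\bot})$ and the anti-invariance of $\mathcal{D}^{\bot}$, does give $C^{3}+C=0$, and your identification $\ker C=\bar{G}\mathcal{D}^{\bot}$ settles the constant-rank clause. The paper supplies no argument of its own here — it only points to Proposition 1.3 of Bejancu--Papaghiuc, whose proof is exactly this component-by-component computation — so your route is essentially the paper's.
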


For $ M $ we have following decomposition of normal space $ TM^{\bot} $: 
\begin{equation*}
TM^{\bot}=\bar{G}\mathcal{D}^{\bot}\oplus \bar{H}\mathcal{D}^{\bot}\oplus \bar{J}\mathcal{D}^{\bot}\oplus \vartheta.
\end{equation*}
We can take an orthonormal frame 
\begin{equation*}
\{e_{1},e_{2},...,e_{m},\bar{G}e_{1},\bar{G}e_{2},...,\bar{G}e_{m}, \bar{H}e_{1},\bar{H}e_{2},...,\bar{H}e_{m}, \bar{J}e_{1},\bar{J}e_{2},...,\bar{J}e_{m}, \bar{U}, \bar{V} \} 
\end{equation*}
of $ \bar{M} $ such that $ \{e_{1},e_{2},...,e_{n}\} $ are tangent to $ M $. Therefore the set $ \{e_{1},e_{2},...,$ $e_{n}, e_{n+1}=\bar{U}, e_{n+2}=\bar{V}\} $ is an orthonormal frame of $ M $. We can consider $ \{e_{1},e_{2},...,e_{n}\} $  such that $ \{e_{1},e_{2},...,e_{p}\} $ is an orthonormal frame of $ \mathcal{D} ^{\bot}$, $ \{e_{p+1},$ $e_{p+2},...,e_{n}\}$ is an orthonormal frame of $ \mathcal{D} $ . Moreover we can take $  \{e_{n+3},...,$ $e_{4m-n}\} $ as an orthonormal frame of $ TM^{\bot} $ such that $  \{e_{n+3},...,e_{n+2+3p}\} $ is an orthonormal frame of $\bar{G}\mathcal{D}^{\bot}\oplus \bar{H}\mathcal{D}^{\bot}\oplus \bar{J}\mathcal{D}^{\bot}  $ and $  \{e_{n+3+3p},e_{n+4+3p},...,e_{4m+2}\} $ is an orthonormal frame of $ \vartheta $. From the definition of semi-invariant manifold we can take $ e_{n+3}=\bar{G}e_{1},\  e_{n+4}=\bar{G}e_{2}\ ,...,\ e_{n+2+p}=\bar{G}e_{p},\ e_{n+3+p}=\bar{H}e_{1},\ e_{n+4+p}=\bar{H}e_{2}\ ,...,\ e_{n+2+2p}=\bar{H}e_{p},\ e_{n+3+2p}=\bar{J}e_{1},\ e_{n+4+2p}=\bar{J}e_{2}\ ,...,\ e_{n+2+3p}=\bar{J}e_{p}   $. Therefore we have following orthonormal basis: 
\begin{eqnarray*}
	\mathcal{D}&=&sp\{e_{\frac{p+1}{4}},e_{\frac{p+2}{4}},...,e_{\frac{n-3}{4}},\bar{G}e_{\frac{p+1}{4}},\bar{G}e_{\frac{p+5}{4}},...,\bar{G}e_{\frac{n-3}{4}}, \bar{H}e_{\frac{p+1}{4}},\\&&\bar{H}e_{\frac{p+2}{4}},...,\bar{H}e_{\frac{n-3}{4}}, \bar{J}e_{\frac{p+1}{4}},\bar{J}e_{\frac{p+2
		}{4}},...,\bar{J}e_{\frac{n-3}{4}} \} \\
	\mathcal{D}^{\bot}&=&sp\{e_{1},e_{2},...,e_{p}\}
\end{eqnarray*}
and 
\begin{eqnarray*}
	\bar{G}\mathcal{D}^{\bot}\oplus \bar{H}\mathcal{D}^{\bot}\oplus \bar{J}\mathcal{D}^{\bot}&=&sp \{\bar{G}e_{1},\bar{G}e_{2},...,\bar{G}e_{p}, \bar{H}e_{1},\bar{H}e_{2},...,\bar{H}e_{p},\\&& \bar{J}e_{1},\bar{J}e_{2},...,\bar{J}e_{p}\}
	\\
	\vartheta&=& sp\{e_{p+1},e_{p+2},...,e_{\frac{4m-n+3p}{4}},\bar{G}e_{p+1},\\&&\bar{G}e_{p+2},...,\bar{G}e_{\frac{4m-n+3p}{4}}, \bar{H}e_{p+1},\bar{H}e_{p+2},\\
	&&...,\bar{H}e_{\frac{4m-n+3p}{4}}, \bar{J}e_{p+1},\bar{J}e_{p+2},...,\bar{J}e_{\frac{4m-n+3p}{4}} \}.
\end{eqnarray*}
For $ M $ we compute covariant derivatives of $ \bar{G}, \bar{H}, \bar{J}  $ by given tangential and normal components. From (\ref{normallik (g(NablaXGY))}), (\ref{normallik (g(NablaXHY))},) (\ref{normallik (g(NablaXJY))}), (\ref{gaussdenk}), (\ref{weingarthen1}), (\ref{weingarthen2}), (\ref{GXifadesi}), (\ref{HXifadesi}), (\ref{GNifadesi}) and (\ref{HNifadesi})  and by easy computation we have following lemmas. 
\begin{lem}
	For any  $ K,L \in \Gamma(TM) $  we have 
	\begin{align}
	\phi\nabla_{K}PL-\phi A_{QL}K&=P\nabla_{K}L-\bar{u}(L)\phi K+\bar{\sigma}(K)P\bar{J}L \\
	&-2\bar{v}(K)\phi\bar{J}L-\bar{v}(L)\phi \bar{J}K+2\bar{v}(K)\phi \bar{J}L_{0}\notag\\
	&-\bar{v}(K)(\phi\nabla_{\bar{U}}\bar{J}PL_{0} 	-\bar{J}\phi\nabla_{\bar{U}}PL_{0}\notag\\
	&-\phi A_{\bar{J}QL_{0}}\bar{U}+\bar{J}\phi A_{QL_{0}}\bar{U}), \notag \\
	\psi\nabla_{K}PL-\psi _{QL}K&= B\mathbf{h}(K,L)+\bar{\sigma}(K)Q\bar{J}L\\
	&-2\bar{v}(K)\psi\bar{J}L-\bar{u}(L)\psi K-\bar{v}(L)\psi\bar{J}K \notag\\&+\bar{v}(K)\psi\bar{J}L_{0}-\bar{v}(K)(\psi\nabla_{\bar{U}}\bar{J}PL_{0}-\bar{J}\psi\nabla_{\bar{U}}PL_{0}\notag\\
	&-\psi A_{\bar{J}QL_{0}}\bar{U}+\bar{J}\psi A_{QL_{0}}\bar{U}-B\bar{J}C\mathbf{h}(\bar{U},PL_{0})),\notag \\
	\bar{u}(\nabla_{K}PL-A_{QL}K)&=\bar{g}(\phi K, \phi L)+\bar{g}(\psi K, \psi L) \label{lemma1.1.3}\\
	&+(d\bar{\sigma}(\bar{U},\bar{V})-2)\bar{v}(K)\bar{v}(L)-\bar{v}(K)(\bar{u}(\nabla_{\bar{U}}\bar{J}PL_{0}\notag \\ &-A_{\bar{J}QL_{0}}\bar{U})+\bar{v}(A_{QL_{0}}\bar{U}-\nabla_{\bar{U}}PL_{0}))\notag,
	\end{align}
	\begin{align}
	\bar{v}(\nabla_{K}PL-A_{QL}K)&=\bar{g}(\phi\bar{J} K, \phi L)+\bar{g}(\psi \bar{J} K, \psi L)  \label{lemma1.1.4}\\
	&-(d\bar{\sigma}(\bar{U},\bar{V})-2)\bar{v}(K)\bar{u}(L)-\bar{v}(K)(\bar{v}(\nabla_{\bar{U}}\bar{J}PL_{0}\notag\\
	&-A_{\bar{J}QL_{0}}\bar{U})+\bar{u}(\nabla_{\bar{U}}PL_{0}+A_{QL_{0}}\bar{U})\notag,\\
	\mathbf{h}(K,PL)-C\mathbf{h}(K,L)+Q\nabla_{K}L&=\nabla^{\bot}_{K}QL-\bar{v}(K)(\mathbf{h}(\bar{U}, \bar{J}PL_{0}) \label{lemma1.1.5}\\&-Q\bar{J}B\mathbf{h}(\bar{U},PL_{0})-C\bar{J}C\mathbf{h}(\bar{U},PL_{0}) \notag \\ &+\nabla_{\bar{U}}^{\bot}\bar{J}QL_{0}-\bar{J}\nabla_{\bar{U}}^{\bot QL_{0}}).\notag
	\end{align}
\end{lem}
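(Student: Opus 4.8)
The plan is to compute the covariant derivative $(\bar\nabla_K\bar G)L$ in two independent ways and to match the results component by component. On the one hand, from the normality of $\bar M$ one has the closed tensorial expression \eqref{Yeni normallik G} for $(\bar\nabla_K\bar G)L$ (equivalently \eqref{normallik (g(NablaXGY))}). On the other hand, differentiating the decomposition $\bar G L = PL + QL$ of \eqref{GXifadesi} and using the Gauss formula \eqref{gaussdenk} on the tangential summand $PL$ and the Weingarten formula \eqref{weingarthen1} on the normal summand $QL$ gives
\[
\bar\nabla_K(\bar G L) = \nabla_K PL + \mathbf{h}(K,PL) - A_{QL}K + \nabla^\perp_K QL ,
\]
while $\bar G(\bar\nabla_K L) = \bar G(\nabla_K L) + \bar G\,\mathbf{h}(K,L) = P\nabla_K L + Q\nabla_K L + B\mathbf{h}(K,L) + C\mathbf{h}(K,L)$ by \eqref{GXifadesi} and \eqref{GNifadesi}. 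Since $\bar\nabla_K(\bar G L) = (\bar\nabla_K\bar G)L + \bar G(\bar\nabla_K L)$, subtracting expresses $(\bar\nabla_K\bar G)L$ purely through $\nabla,\mathbf{h},A,\nabla^\perp$ and the morphisms $P,Q,B,C$.

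The one term on the right-hand side of \eqref{Yeni normallik G} that is not yet in the required shape is $\bar v(K)\big(2\bar J L_0 - (\bar\nabla_{\bar U}\bar J)\bar G L_0\big)$. Here I use $\bar G\bar U = \bar G\bar V = 0$, so that $\bar G L = \bar G L_0$ and hence $PL = PL_0$, $QL = QL_0$; then I expand $(\bar\nabla_{\bar U}\bar J)\bar G L_0 = \bar\nabla_{\bar U}\big(\bar J(PL_0 + QL_0)\big) - \bar J\,\bar\nabla_{\bar U}(PL_0 + QL_0)$ by a second application of the Gauss and Weingarten formulas (to $PL_0$, $\bar J PL_0$ on the tangential side and to $QL_0$, $\bar J QL_0$, the latter lying in $TM^\perp$). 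This is exactly the source of the terms $\mathbf{h}(\bar U,\bar J PL_0)$, $A_{\bar J QL_0}\bar U$, $\nabla^\perp_{\bar U}\bar J QL_0$, $\bar J\nabla^\perp_{\bar U}QL_0$ and their $\phi$- and $\psi$-projected counterparts appearing in the five identities.

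With both sides fully expanded I equate them and split the resulting identity into its $TM$-component and its $TM^\perp$-component. The normal component yields \eqref{lemma1.1.5} directly. For the tangential component I apply the projectors $\phi$, $\psi$, $\bar u(\,\cdot\,)$, $\bar v(\,\cdot\,)$ associated with the splitting $TM = \mathcal D \oplus \mathcal D^\perp \oplus sp\{\bar U,\bar V\}$ of \eqref{altmanifoldX}: the $\mathcal D$-projection $\phi$ produces the first identity, the $\mathcal D^\perp$-projection $\psi$ the second (together with the $\bar J\mathcal D^\perp$-valued pieces, since $\bar J$ swaps $\mathcal D^\perp$ with part of the normal bundle, which is why $Q$- and $C$-terms still occur there), and the $\bar U$- and $\bar V$-components give \eqref{lemma1.1.3} and \eqref{lemma1.1.4}. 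Along the way I repeatedly use $\bar G\bar U=\bar G\bar V=0$, $\bar u\circ\bar G=\bar v\circ\bar G=0$, $\bar J\bar U=-\bar V$, $\bar J\bar V=\bar U$, $\bar u(\bar J K)=\bar v(K)$, the mutual orthogonality of $\mathcal D,\mathcal D^\perp,sp\{\bar U,\bar V\}$, and $\bar g(\bar G K,\bar G L)=\bar g(K,L)-\bar u(K)\bar u(L)-\bar v(K)\bar v(L)=\bar g(\phi K,\phi L)+\bar g(\psi K,\psi L)$ to rewrite the metric terms and to collect the coefficient $d\bar\sigma(\bar U,\bar V)-2$ in front of $\bar v(K)\bar v(L)$ in \eqref{lemma1.1.3} (and likewise for \eqref{lemma1.1.4}).

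The main obstacle is not conceptual but organizational: the normality formula \eqref{Yeni normallik G} already carries nine terms, the second round of Gauss--Weingarten applied to $(\bar\nabla_{\bar U}\bar J)\bar G L_0$ roughly doubles this count, and each summand must be correctly assigned to $\mathcal D$, $\mathcal D^\perp$, $sp\{\bar U,\bar V\}$, $\bar J\mathcal D^\perp$, or the remaining normal bundle $\bar G\mathcal D^\perp\oplus\bar H\mathcal D^\perp\oplus\vartheta$ before projecting. Keeping the horizontal reductions straight — in particular that $\bar G$ and $\bar H$ only see the horizontal part $L_0$ of $L$ — is what makes this "easy computation" lengthy rather than genuinely hard; an entirely analogous argument starting from $\bar\nabla_K(\bar H L)$ and $\bar\nabla_K(\bar J L)$ yields the companion lemmas for $\bar H$ and $\bar J$.
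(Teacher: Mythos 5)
Your proposal is correct and follows essentially the same route the paper indicates: expressing $(\bar\nabla_K\bar G)L$ once via the normality formula and once via the Gauss--Weingarten formulas applied to the decomposition $\bar G L=PL+QL$ (with a second round of Gauss--Weingarten on the $(\bar\nabla_{\bar U}\bar J)\bar G L_0$ term), then splitting into normal and tangential parts and projecting the latter by $\phi$, $\psi$, $\bar u$, $\bar v$. Your identification of the origin of the $\mathbf{h}(\bar U,\bar J PL_0)$, $A_{\bar J QL_0}\bar U$ and $\nabla^\perp_{\bar U}$ terms, and of the coefficient $d\bar\sigma(\bar U,\bar V)-2$, matches the computation the paper leaves implicit.
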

\begin{lem}
	 For arbitrary vector fields $ K $ and $ L  $ on $ M $ we have 
	\begin{align}
	\phi\nabla_{K}P\bar{J}L-\phi A_{Q\bar{J}L}K&=P\bar{J}\nabla_{K}L-\bar{\sigma}(K)PL+2\bar{u}(K)\phi\bar{J}L\\
	&+\bar{u}(L)\phi \bar{J}K-\bar{v}(L)\phi K - 2\bar{u}(K)\phi\bar{J}L_{0} \notag\\
	&-\bar{u}(K)(\phi\nabla_{\bar{U}}\bar{J}PL_{0}	-\bar{J}\phi\nabla_{\bar{U}}PL_{0}\notag \\&-\phi A_{\bar{J}QL_{0}}\bar{U}+\bar{J}\phi A_{QL_{0}}\bar{U}), \notag\\
	\psi\nabla_{K}P\bar{J}L-\psi A_{Q\bar{J}L}K&= B\bar{J}\mathbf{h}(K,L)-\sigma(K)QL+2\bar{u}(K)\psi\bar{J}L\label{lemma1.1.2k}\\
	&+\bar{u}(L)\psi \bar{J}K-\bar{v}(L)\psi K-2\bar{u}(K)\psi\bar{J}L_{0} \notag\\
	&-\bar{u}(K)(\psi\nabla_{\bar{U}}\bar{J}PL_{0}-\bar{J}\psi\nabla_{\bar{U}}PL_{0}\notag \\&-\psi A_{\bar{J}QL_{0}}\bar{U}+\bar{J}\psi A_{QL_{0}}\bar{U}-B\bar{J}C\mathbf{h}(\bar{U},PL_{0})),\notag \\
	\bar{u}(\nabla_{K}P\bar{J}L-A_{Q\bar{J}L}K)&=-\bar{g}(\phi \bar{J}K, \phi L)-\bar{g}(\psi \bar{J}K, \psi L) \\
	&-(d\sigma(\bar{U},\bar{V})-2)\bar{v}(K)\bar{v}(L)+\bar{u}(K)(-\bar{u}(\nabla_{\bar{U}}\bar{J}PL_{0}\notag \\ &-A_{\bar{J}QL_{0}}\bar{U})+\bar{v}(\nabla_{\bar{U}}PL_{0}+A_{QL_{0}}\bar{U})\notag,\\
	\bar{v}(\nabla_{K}P\bar{J}L-A_{Q\bar{J}L}K)&=\bar{g}(\phi K, \phi L)+\bar{g}(\psi K, \psi L) \\
	&+(d\sigma(\bar{U},\bar{V})-2)\bar{u}(K)\bar{u}(L)-\bar{u}(K)(\bar{u}(\nabla_{\bar{U}}PL_{0}\notag \\ &+A_{QL_{0}}\bar{U})+\bar{v}(\nabla_{\bar{U}}\bar{J}PL_{0}-A_{\bar{J}QL_{0}}\bar{U})\notag \\ &+\bar{v}(\nabla_{\bar{U}}PL_{0})+\bar{u}(A_{\bar{J}QL_{0}}\bar{U})+\bar{v}(A_{\bar{J}QL_{0}}\bar{U})),\notag\\
	\mathbf{h}(K,P\bar{J}L)-C\bar{J}\mathbf{h}(K,L)&=-Q\bar{J}\nabla_{K}L-\nabla^{\bot}_{K}Q\bar{J}L\label{lemma1.1.5k}\\&-\bar{u}(K)(\mathbf{h}(\bar{U}, \bar{J}PL_{0})-Q\bar{J}B\mathbf{h}(\bar{U},PL_{0})\notag \\ &-C\bar{J}C\mathbf{h}(\bar{U},PL_{0})+\nabla_{\bar{U}}^{\bot}\bar{J}QL_{0}-\bar{J}\nabla_{\bar{U}}^{\bot QL_{0}}) \notag.
	\end{align} 	
\end{lem}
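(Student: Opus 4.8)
The plan is to imitate the derivation of the preceding lemma, now with $\bar{H}=\bar{G}\bar{J}$ in the role of $\bar{G}$, by computing $\bar{\nabla}_{K}(\bar{H}L)$ in two different ways and comparing. On one side I would use (\ref{HXifadesi}) to write $\bar{H}L=P\bar{J}L+Q\bar{J}L$ with $P\bar{J}L\in\Gamma(TM)$ and $Q\bar{J}L\in\Gamma(TM^{\bot})$, and then differentiate, applying the Gauss formula (\ref{gaussdenk}) to $P\bar{J}L$ and the Weingarten formula (\ref{weingarthen1}) to $Q\bar{J}L$, which gives
\[
\bar{\nabla}_{K}(\bar{H}L)=\nabla_{K}(P\bar{J}L)+\mathbf{h}(K,P\bar{J}L)-A_{Q\bar{J}L}K+\nabla^{\bot}_{K}(Q\bar{J}L).
\]
On the other side I would expand $\bar{\nabla}_{K}(\bar{H}L)=(\bar{\nabla}_{K}\bar{H})L+\bar{H}(\bar{\nabla}_{K}L)$, substitute the normality expression (\ref{Yeni normallik H}) for $(\bar{\nabla}_{K}\bar{H})L$ with every structure tensor barred, split $\bar{\nabla}_{K}L=\nabla_{K}L+\mathbf{h}(K,L)$ once more by (\ref{gaussdenk}), and decompose the outputs through (\ref{HXifadesi}) and (\ref{HNifadesi}), namely $\bar{H}\nabla_{K}L=P\bar{J}\nabla_{K}L+Q\bar{J}\nabla_{K}L$ and $\bar{H}\mathbf{h}(K,L)=B\bar{J}\mathbf{h}(K,L)+C\bar{J}\mathbf{h}(K,L)$.

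Equating the two expressions for $\bar{\nabla}_{K}(\bar{H}L)$ and splitting along the orthogonal decomposition $T\bar{M}=\mathcal{D}\oplus\mathcal{D}^{\bot}\oplus sp\{\bar{U},\bar{V}\}\oplus TM^{\bot}$ will yield the five stated identities: applying $\phi$ gives the $\mathcal{D}$-component, $\psi$ the $\mathcal{D}^{\bot}$-component, $\bar{u}$ and $\bar{v}$ the $\bar{U}$- and $\bar{V}$-components, and the normal projection the last equation. Most of the terms on the right can then be read off directly once one writes $K=K_{0}+\bar{u}(K)\bar{U}+\bar{v}(K)\bar{V}$ and $L=L_{0}+\bar{u}(L)\bar{U}+\bar{v}(L)\bar{V}$ and uses the elementary facts $\bar{J}\bar{U}=-\bar{V}$, $\bar{J}\bar{V}=\bar{U}$, $\bar{G}\bar{U}=\bar{G}\bar{V}=0$ from Definition \ref{complexalmostscontact}, the orthogonal splitting $\bar{g}(K,L)=\bar{g}(\phi K,\phi L)+\bar{g}(\psi K,\psi L)+\bar{u}(K)\bar{u}(L)+\bar{v}(K)\bar{v}(L)$, and the fact that $P\bar{J}(\cdot)$ takes values in $\mathcal{D}$ while $B\bar{J}(\cdot)$ takes values in $\mathcal{D}^{\bot}$ (so that, for instance, $B\bar{J}\mathbf{h}(K,L)$ enters only the $\mathcal{D}^{\bot}$-equation and $C\bar{J}\mathbf{h}(K,L)$ only the normal one). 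In this way the summands $-\bar{\sigma}(K)PL$, $2\bar{u}(K)\phi\bar{J}L$, $\bar{u}(L)\phi\bar{J}K$, $-\bar{v}(L)\phi K$, the metric terms and the $d\bar{\sigma}(\bar{U},\bar{V})$ contributions would all come out of the corresponding pieces of (\ref{Yeni normallik H}).

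The one genuinely delicate step, and the part I expect to be the main obstacle, is the treatment of the summand $\bar{u}(K)\big(-2\bar{J}L_{0}-(\bar{\nabla}_{\bar{U}}\bar{J})\bar{G}L_{0}\big)$ occurring in (\ref{Yeni normallik H}). To deal with it I would write $(\bar{\nabla}_{\bar{U}}\bar{J})\bar{G}L_{0}=\bar{\nabla}_{\bar{U}}(\bar{J}\bar{G}L_{0})-\bar{J}\bar{\nabla}_{\bar{U}}(\bar{G}L_{0})$, use $\bar{G}L_{0}=PL_{0}+QL_{0}$ from (\ref{GXifadesi}) with $PL_{0}$ tangent and $QL_{0}$ normal, apply the Gauss and Weingarten formulas to $\bar{\nabla}_{\bar{U}}(PL_{0})$ and $\bar{\nabla}_{\bar{U}}(QL_{0})$, and then decompose each resulting vector together with its $\bar{J}$-image once more via (\ref{altmanifoldJX}), (\ref{GNifadesi}) and (\ref{HNifadesi}). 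This is precisely what should generate the bracketed combinations $\phi\nabla_{\bar{U}}\bar{J}PL_{0}-\bar{J}\phi\nabla_{\bar{U}}PL_{0}-\phi A_{\bar{J}QL_{0}}\bar{U}+\bar{J}\phi A_{QL_{0}}\bar{U}$ in the first equation, the extra term $-B\bar{J}C\mathbf{h}(\bar{U},PL_{0})$ in the second, and $\mathbf{h}(\bar{U},\bar{J}PL_{0})-Q\bar{J}B\mathbf{h}(\bar{U},PL_{0})-C\bar{J}C\mathbf{h}(\bar{U},PL_{0})+\nabla^{\bot}_{\bar{U}}\bar{J}QL_{0}-\bar{J}\nabla^{\bot}_{\bar{U}}QL_{0}$ in the last one; keeping the signs straight and tracking which sub-bundle each term falls into is the only real labour here. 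Everything else reduces to the routine linear algebra of the projections $P,Q,\phi,\psi,B,C$ together with the identities of Definition \ref{complexalmostscontact} and the normality formula, so the argument is ``by easy computation'', exactly as for the preceding lemma.
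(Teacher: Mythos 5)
Your plan is correct and follows essentially the same route the paper intends: the paper gives no written proof beyond citing the Gauss--Weingarten formulas, the tangential/normal decompositions of $\bar{G}$, $\bar{H}$ and their normal-bundle analogues, and the normality formulas, and declaring the result an ``easy computation.'' Your choice of the covariant-derivative form of normality for $\bar{H}$ (the version containing the $(\bar{\nabla}_{\bar{U}}\bar{J})\bar{G}L_{0}$ term) is exactly what generates the bracketed $L_{0}$-expressions in the statement, so you have identified the right ingredient and the only genuinely delicate step.
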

\begin{lem}
	 For any  $ K,L \in \Gamma(TM) $  we have 
	\begin{align*}
	\phi\nabla_{K}BN-\phi A_{CN}K-PA_{N}K&= \bar{v}(K)(\phi A_{\bar{J}BN}\bar{U}+\phi\bar{J}\nabla_{\bar{U}}BN\\
	&-\phi A_{\bar{J}CN}\bar{U}-\phi\bar{J}A_{CN}\bar{U}),\notag\\
	\psi\nabla_{K}BN-\psi A_{CN}K -B\nabla^{\bot}_{K}N&=\bar{\sigma}(K)B\bar{J}N+\bar{v}(K)(\psi A_{\bar{J}BN}\bar{U}
	\\&+ \psi\bar{J}\nabla_{\bar{U}}BN+B\bar{J}C\mathbf{h}(\bar{U},BN)+\psi A_{\bar{J}CN}\bar{U}\notag\\&-\psi\bar{J}A_{CN}\bar{U}+B\bar{J}C\nabla_{U}^\bot CN),\notag
	\end{align*}
	\begin{align*}
	\bar{u}(\nabla_{K}BN)-\bar{u}(A_{CN}K)&=\bar{v}(K)[\bar{u}(A_{\bar{J}BN}\bar{U})+\bar{v}(\nabla_{U}BN)\\&+\bar{u}(A_{C\bar{J}N}\bar{U})+\bar{v}(A_{CN}\bar{U})],\notag\\
	\bar{v}(\nabla_{K}BN)-\bar{v}(A_{CN}K)&=\bar{v}(K)[\bar{v}(A_{\bar{J}BN}\bar{U})-\bar{u}(\nabla_{U}BN)\\&+\bar{v}(A_{\bar{J}CN}\bar{U})+\bar{u}(A_{CN}\bar{U})],\notag
	\end{align*}
	\begin{align*}
	\mathbf{h}(K,BN)+\nabla_{K}^\bot CN-QA_{N}K&=C\nabla_{K}^\bot N+\bar{\sigma}(K)C\bar{J}N-\bar{v}(K)[\nabla_{\bar{U}}^\bot B\bar{J}N\\&-Q\bar{J}B\mathbf{h}(\bar{U},BN)+\nabla_{\bar{U}}^\bot\bar{J}CN \notag \\ &-Q\bar{J}C\nabla_{\bar{U}}^\bot CN-C\bar{J}C\nabla_{\bar{U}}^\bot CN]. \notag
	\end{align*}
\end{lem}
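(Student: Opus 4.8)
The plan is to compute $\bar\nabla_{K}(\bar GN)$ in two ways and compare the components in the pointwise splitting $TM\oplus TM^{\bot}=\mathcal D\oplus\mathcal D^{\bot}\oplus sp\{\bar U,\bar V\}\oplus TM^{\bot}$, exactly as in the proofs of the two preceding lemmas but now with a \emph{normal} argument. The feature special to the present case is a simplification of the normality identity: since $\bar U,\bar V$ are tangent to $M$ while $N\in\Gamma(TM^{\bot})$, we have $\bar u(N)=\bar g(N,\bar U)=0$ and $\bar v(N)=\bar g(N,\bar V)=0$, so in the notation of (\ref{Yeni normallik G}) the field $N$ equals its horizontal component $N_{0}$. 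Substituting $L=N$ into (\ref{Yeni normallik G}) and using $\bar g(K,N)=0$ annihilates the summands $-\bar u(N)K$, $-\bar v(N)\bar JK$, $\bar g(K,N)\bar U$ and the whole $d\bar\sigma(\bar U,\bar V)$-term, while the pair $-2\bar v(K)\bar JN+2\bar v(K)\bar JN_{0}$ cancels, leaving
\begin{equation*}
(\bar\nabla_{K}\bar G)N=\bar\sigma(K)\bar HN-\bar v(K)(\bar\nabla_{\bar U}\bar J)\bar GN+\bar g(\bar JK,N)\bar V .
\end{equation*}
This is what forces every term on the right of the Lemma, apart from those built from $A_{N}K$, $\nabla^{\bot}_{K}N$ and $\bar\sigma(K)$, to carry the factor $\bar v(K)$.

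Next I would write $\bar\nabla_{K}(\bar GN)=(\bar\nabla_{K}\bar G)N+\bar G(\bar\nabla_{K}N)$ and expand both sides by the structural formulas of the preliminaries. On the left, put $\bar GN=BN+CN$ with $BN\in\Gamma(\mathcal D^{\bot})$ and $CN\in\Gamma(TM^{\bot})$ (recorded just before the statement), apply the Gauss formula (\ref{gaussdenk}) to $\bar\nabla_{K}(BN)$ and the Weingarten formula (\ref{weingarthen1}) to $\bar\nabla_{K}(CN)$. On the right, Weingarten gives $\bar\nabla_{K}N=-A_{N}K+\nabla^{\bot}_{K}N$, and then $\bar G(A_{N}K)=PA_{N}K+QA_{N}K$ by (\ref{GXifadesi}) and $\bar G(\nabla^{\bot}_{K}N)=B\nabla^{\bot}_{K}N+C\nabla^{\bot}_{K}N$ by (\ref{GNifadesi}). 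The only term still containing an undifferentiated structure tensor is $(\bar\nabla_{\bar U}\bar J)\bar GN=(\bar\nabla_{\bar U}\bar J)BN+(\bar\nabla_{\bar U}\bar J)CN$, which I would expand through $(\bar\nabla_{\bar U}\bar J)W=\bar\nabla_{\bar U}(\bar JW)-\bar J\bar\nabla_{\bar U}W$: here the anti-invariance $\bar J\mathcal D^{\bot}\subset TM^{\bot}$ makes $\bar\nabla_{\bar U}(\bar JBN)=-A_{\bar JBN}\bar U+\nabla^{\bot}_{\bar U}(\bar JBN)$ governed by Weingarten, whereas $\bar\nabla_{\bar U}(BN)$ is governed by Gauss. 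Applying Gauss and Weingarten once more to $\bar\nabla_{\bar U}(BN)$, $\bar\nabla_{\bar U}(CN)$, $\bar\nabla_{\bar U}(\bar JCN)$, and then re-decomposing each $\bar J$-image through $P,Q,B,C$, produces precisely the cluster $A_{\bar JBN}\bar U$, $\bar J\nabla_{\bar U}BN$, $A_{\bar JCN}\bar U$, $\bar JA_{CN}\bar U$ together with the $\bar JC\mathbf{h}(\bar U,BN)$- and $\bar JC\nabla^{\bot}_{\bar U}CN$-contributions that occur in the five equations.

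It then remains to apply, in turn, the projections $\phi$ onto $\mathcal D$, $\psi$ onto $\mathcal D^{\bot}$, the $1$-forms $\bar u$ and $\bar v$, and the orthogonal projection onto $TM^{\bot}$ to the common value of the two expansions, and to read off the five displayed identities (after moving the $A_{N}K$- and $\nabla^{\bot}_{K}N$-pieces of $\bar G(\bar\nabla_{K}N)$ to the appropriate side). Here one uses repeatedly that $\phi$ and $\psi$ annihilate $sp\{\bar U,\bar V\}$-valued and normal-valued terms (and that $\phi$ annihilates $\mathcal D^{\bot}$-valued ones), that $\bar u,\bar v$ annihilate $TM^{\bot}$-valued terms, that $\bar g(\bar JK,N)=-\bar g(K,\bar JN)$, and the polarization $\bar g(\phi K,\phi L)+\bar g(\psi K,\psi L)=\bar g(K,L)-\bar u(K)\bar u(L)-\bar v(K)\bar v(L)$ coming from (\ref{altmanifoldX}) and orthogonality of its summands. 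The main obstacle is entirely organizational: the single term $\bar v(K)(\bar\nabla_{\bar U}\bar J)\bar GN$ is a second-order object whose complete expansion involves $\bar\nabla_{\bar U}$ of four distinct tangential and normal fields, each of which must itself be split by Gauss--Weingarten, its $\bar J$-image re-decomposed, and the resulting dozen-odd pieces sorted into the correct summand; keeping the signs and the placement of $\phi,\psi,B,C$ consistent is the delicate bookkeeping, but no idea beyond the normality formula (\ref{Yeni normallik G}) and the decomposition relations already established in the paper is needed.
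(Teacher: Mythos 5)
Your proposal is correct and follows essentially the same route the paper intends: the paper offers no written proof beyond citing the normality formula for $(\bar\nabla_{K}\bar G)L$, the Gauss--Weingarten formulas, and the $P,Q,B,C$ decompositions, and your computation of $\bar\nabla_{K}(\bar GN)$ two ways followed by projection onto $\mathcal D$, $\mathcal D^{\bot}$, $sp\{\bar U\}$, $sp\{\bar V\}$ and $TM^{\bot}$ is exactly that ``easy computation.'' Your preliminary simplification of the normality identity for a normal argument (all $\bar u(N)$, $\bar v(N)$, $\bar g(K,N)$ terms vanishing, leaving only the $\bar\sigma(K)$- and $\bar v(K)$-terms) correctly explains the shape of the right-hand sides.
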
 
\begin{lem}
	For any  $ K,L \in \Gamma(TM) $  we have 
	\begin{align*}
	\phi\nabla_{K}B\bar{J}N-\phi A_{C\bar{J}N}K+PA_{N}K&= \bar{u}(K)(-\phi A_{\bar{J}BN}\bar{U}\\
	&+\phi\bar{J}\nabla_{U}BN-\phi A_{\bar{J}CN}\bar{U}+\phi\bar{J}A_{CN}\bar{U}),\notag\\
	\psi\nabla_{K}B\bar{J}N-\psi A_{C\bar{J}N}K -B\bar{J}\nabla^{\bot}_{K}N&=\bar{\sigma}(K)BN+\bar{u}(K)(-\psi A_{\bar{J}BN}\bar{U}\\&- \psi\bar{J}\nabla_{\bar{U}} BN-B\bar{J}C\mathbf{h}(\bar{U},BN)-\psi A_{\bar{J}CN}\bar{U} \notag \\ &+\psi\bar{J}A_{CN}\bar{U}-B\bar{J}C\nabla_{U}^\bot CN),\notag\\ 
	\bar{u}(\nabla_{K}B\bar{J}N)-\bar{u}(A_{C\bar{J}N}K)&=-\bar{u}(K)[\bar{u}(A_{\bar{J}BN}\bar{U})\\&+\bar{v}(\nabla_{U}BN)+\bar{u}(A_{C\bar{J}N}\bar{U})+\bar{v}(A_{CN}\bar{U})],\notag\\
	\bar{v}(\nabla_{K}B\bar{J}N)-\bar{v}(A_{C\bar{J}N}K)&=-\bar{u}(K)[\bar{v}(A_{\bar{J}BN}\bar{U})\\&-\bar{u}(\nabla_{U}BN)+\bar{v}(A_{\bar{J}CN}\bar{U})-\bar{u}(A_{CN}\bar{U})],\notag\\
	\mathbf{h}(K,B\bar{J}N)+\nabla_{K}^\bot C\bar{J}N-QA_{N}K&=C\bar{J}\nabla_{K}^\bot N+\bar{\sigma}(K)CN+\bar{u}(K)[\nabla_{\bar{U}}^\bot B\bar{J}N\\&-Q\bar{J}B\mathbf{h}(\bar{U},BN)-C\bar{J}C\mathbf{h}(\bar{U},BN)\notag\\&+\nabla_{\bar{U}}^\bot\bar{J}CN -Q\bar{J}C\nabla_{\bar{U}}^\bot CN-C\bar{J}C\nabla_{\bar{U}}^\bot CN]. \notag	
	\end{align*}
\end{lem}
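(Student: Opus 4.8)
The five displayed equations are nothing other than the $\mathcal D$-part, the $\mathcal D^{\bot}$-part, the $\bar u(\cdot)$-part, the $\bar v(\cdot)$-part and the $TM^{\bot}$-part of one single vector identity, so the plan is to produce that identity and then split it into its five components. The identity is obtained exactly as in the three preceding lemmas, now with the structure tensor $\bar H$ and a normal argument: for $K\in\Gamma(TM)$ and $N\in\Gamma(TM^{\bot})$ one uses the derivation property of the ambient Levi--Civita connection with respect to the $(1,1)$-tensor $\bar H$,
\[
\bar\nabla_{K}(\bar HN)=(\bar\nabla_{K}\bar H)N+\bar H(\bar\nabla_{K}N),
\]
and expands both sides in terms of the induced data of the submanifold.

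On the left I would substitute $\bar HN=B\bar JN+C\bar JN$ from \eqref{HNifadesi}, then apply the Gauss formula \eqref{gaussdenk} to the tangential summand $B\bar JN$ and the Weingarten formula \eqref{weingarthen1} to the normal summand $C\bar JN$, obtaining
\[
\bar\nabla_{K}(\bar HN)=\nabla_{K}B\bar JN+\mathbf{h}(K,B\bar JN)-A_{C\bar JN}K+\nabla^{\bot}_{K}C\bar JN .
\]

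On the right I would first compute $(\bar\nabla_{K}\bar H)N$ from the normality formula \eqref{Yeni normallik H}. Since $\bar U$ and $\bar V$ are tangent to $M$ we have $\bar u(N)=\bar v(N)=0$, and also $\bar g(K,N)=0$ and $\bar g(\bar JK,N)=-\bar g(K,\bar JN)=0$; hence almost every term of \eqref{Yeni normallik H} vanishes and it reduces to $(\bar\nabla_{K}\bar H)N=-\bar\sigma(K)\bar GN-\bar u(K)(\bar\nabla_{\bar U}\bar J)(\bar GN)$. Into this I substitute $\bar GN=BN+CN$ from \eqref{GNifadesi}, rewrite $(\bar\nabla_{\bar U}\bar J)(\bar GN)=\bar\nabla_{\bar U}(\bar J\bar GN)-\bar J\,\bar\nabla_{\bar U}(\bar GN)$ using $\bar J\bar G=-\bar H$ and $\bar HN=B\bar JN+C\bar JN$, and keep peeling off tangential and normal pieces with the Gauss--Weingarten formulas \eqref{gaussdenk}, \eqref{weingarthen1}, \eqref{weingarthen2} and the decompositions \eqref{altmanifoldX}, \eqref{altmanifoldJX}, \eqref{GXifadesi}, \eqref{HXifadesi}, \eqref{GNifadesi}, \eqref{HNifadesi}; this is what manufactures the terms $A_{\bar JBN}\bar U$, $\bar J\nabla_{\bar U}BN$, $\mathbf{h}(\bar U,BN)$, $\nabla^{\bot}_{\bar U}CN$, $\nabla^{\bot}_{\bar U}B\bar JN$ and their companions, each carrying the factor $\bar u(K)$. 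For the remaining summand $\bar H(\bar\nabla_{K}N)=-\bar H(A_{N}K)+\bar H(\nabla^{\bot}_{K}N)$ I would write $\bar H=\bar G\bar J$ and split $\bar H(A_{N}K)$ into its tangential and normal parts and $\bar H(\nabla^{\bot}_{K}N)=B\bar J\nabla^{\bot}_{K}N+C\bar J\nabla^{\bot}_{K}N$ via \eqref{HNifadesi}, using throughout the algebraic identities $\bar G\bar U=\bar G\bar V=\bar H\bar U=\bar H\bar V=0$ and the skew-symmetry of $\bar G$ and $\bar H$ whenever an inner product against $\bar U$ or $\bar V$ occurs. Equating the two expansions and reading off the $\phi$-, $\psi$-, $\bar u$-, $\bar v$- and normal parts then yields the five identities.

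The proof uses no idea not already present in the three preceding lemmas (whose $\bar G\leftrightarrow\bar H$, $\bar U\leftrightarrow\bar V$ analogue it is, so those lemmas serve as a consistency check); the only genuine difficulty is the bookkeeping, namely correctly tracking into which of the four mutually orthogonal summands $\mathcal D$, $\mathcal D^{\bot}$, $sp\{\bar U,\bar V\}$ and $TM^{\bot}$ each of the roughly two dozen resulting terms falls. Here it is essential that $BN$ and $B\bar JN$ lie in $\Gamma(\mathcal D^{\bot})$ while $\bar J(BN)$ lies in $\Gamma(TM^{\bot})$ — this is exactly what makes shape operators such as $A_{\bar JBN}$ well defined — and the apparent circularity of $\bar HN$ reappearing inside the expansion of $(\bar\nabla_{\bar U}\bar J)(\bar GN)$ is harmless, since those reappearing terms already carry the factor $\bar u(K)$ and are simply carried along unexpanded.
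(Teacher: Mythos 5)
Your proposal is correct and follows exactly the route the paper intends: the paper offers no written proof of this lemma beyond the remark that it follows ``by easy computation'' from the normality formula for $\bar\nabla\bar H$, the Gauss--Weingarten formulas, and the decompositions of $\bar GK$, $\bar HK$, $\bar GN$, $\bar HN$, which is precisely the expansion of $\bar\nabla_K(\bar HN)=(\bar\nabla_K\bar H)N+\bar H(\bar\nabla_KN)$ that you carry out. Your reduction of $(\bar\nabla_K\bar H)N$ to $-\bar\sigma(K)\bar GN-\bar u(K)(\bar\nabla_{\bar U}\bar J)(\bar GN)$ using $\bar u(N)=\bar v(N)=\bar g(K,N)=\bar g(\bar JK,N)=0$ is the key simplification, and the remaining work is the component bookkeeping you describe.
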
 

As we know the covariant derivatives of structure vector fields are important. In the following lemma we give 
the covariant derivatives of $ \bar{U} $ and $ \bar{V} $ with $ \nabla $ on $M$. 
\begin{lem}
	For any  $ K,L \in \Gamma(TM) $ we have	
	\begin{eqnarray*}
		\nabla_{K}\bar{U}&=&-PK+\bar{\sigma}(K)\bar{V},\ \ \ \mathbf{h}(K,\bar{U})=-QK \label{almanifoldnablaKU}\\
		\nabla_{K}\bar{V}&=&-P\bar{J}K-\bar{\sigma}(K)\bar{U},\ \ \ \mathbf{h}(K,\bar{V})=-Q\bar{J}K.\label{almanifoldnablaKV}
	\end{eqnarray*}
\end{lem}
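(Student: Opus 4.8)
The plan is to read everything off Korkmaz's ambient identities \eqref{nabla KU} and \eqref{nabla KV} by decomposing them into tangential and normal parts via the Gauss formula \eqref{gaussdenk}. Since $\bar U$ is tangent to $M$, the Gauss formula gives $\bar\nabla_K\bar U=\nabla_K\bar U+\mathbf h(K,\bar U)$, with $\nabla_K\bar U\in\Gamma(TM)$ and $\mathbf h(K,\bar U)\in\Gamma(TM^{\bot})$. On the other hand, \eqref{nabla KU} (written with bars for the ambient data) reads $\bar\nabla_K\bar U=-\bar GK+\bar\sigma(K)\bar V$. Inserting the decomposition \eqref{GXifadesi}, $\bar GK=PK+QK$ with $PK\in\Gamma(TM)$ and $QK\in\Gamma(TM^{\bot})$, and recalling $\bar V\in\Gamma(TM)$, I would regroup the right-hand side as $\bigl(-PK+\bar\sigma(K)\bar V\bigr)+\bigl(-QK\bigr)$, the first bracket tangential and the second normal. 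By uniqueness of the tangential/normal splitting this forces $\nabla_K\bar U=-PK+\bar\sigma(K)\bar V$ and $\mathbf h(K,\bar U)=-QK$.

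The second pair follows identically from \eqref{nabla KV}. As $\bar V$ is tangent, $\bar\nabla_K\bar V=\nabla_K\bar V+\mathbf h(K,\bar V)$, while \eqref{nabla KV} gives $\bar\nabla_K\bar V=-\bar HK-\bar\sigma(K)\bar U$. Using the decomposition \eqref{HXifadesi}, $\bar HK=P\bar JK+Q\bar JK$ with $P\bar JK$ tangential and $Q\bar JK$ normal, and $\bar U\in\Gamma(TM)$, the right-hand side separates as $\bigl(-P\bar JK-\bar\sigma(K)\bar U\bigr)+\bigl(-Q\bar JK\bigr)$. Matching tangential and normal components yields $\nabla_K\bar V=-P\bar JK-\bar\sigma(K)\bar U$ and $\mathbf h(K,\bar V)=-Q\bar JK$, as claimed.

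There is no genuine analytic obstacle; the whole content is bookkeeping, and the only two points deserving a word are the following. First, for $K$ tangent to $M$ the ambient one-form $\bar\sigma(K)=\bar g(\bar\nabla_K\bar U,\bar V)$ requires no boundary correction, since $\bar g$ is the restricted metric and the normal part of $\bar\nabla_K\bar U$ is killed by pairing with the tangent field $\bar V$. Second, one uses the observation already recorded before \eqref{GNifadesi} that $\bar GK,\bar HK\in \mathrm{sp}\{\bar U,\bar V\}^{\bot}$, so the terms $\bar\sigma(K)\bar V$ and $-\bar\sigma(K)\bar U$ lie cleanly inside the tangential component and are not further split. With these remarks the identification of the four pieces is immediate, and no further computation is needed.
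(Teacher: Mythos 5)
Your proposal is correct and is essentially the paper's own argument: substitute Korkmaz's identities \eqref{nabla KU} and \eqref{nabla KV} into the Gauss formula \eqref{gaussdenk}, use the decompositions \eqref{GXifadesi} and \eqref{HXifadesi} of $\bar GK$ and $\bar HK$, and match tangential and normal components. The paper states this more tersely, but the steps are the same.
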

\begin{proof}
	From (\ref{nabla KU}) and (\ref{gaussdenk}) we get 
	\begin{eqnarray*}
		-\bar{G}K+\bar{\sigma}(K)\bar{V}=\nabla_{K}\bar{U}+\mathbf{h}(K,\bar{U}),
	\end{eqnarray*}
	and by consider tangent and normal components we obtain (\ref{almanifoldnablaKU}). Similarly from (\ref{nabla KV}) and (\ref{gaussdenk}) we get (\ref{almanifoldnablaKV}).
\end{proof}
Also from these lemmas, we get following corollaries. 
\begin{cor}
	For $ M $ we have 
	\begin{eqnarray*}
		\mathbf{h}(K,\bar{U})&=&\mathbf{h}(K,\bar{V})=0\\ 
		\nabla_{K}\bar{U}&=&-PK+\bar{\sigma}(K)\bar{V},\ \ \nabla_{K}\bar{V}=-P\bar{J}K+\bar{\sigma}(K)\bar{U} \label{altmanifoldnablaxU}
	\end{eqnarray*}
	for all $ K\in \Gamma(\mathcal{D}) $, and 
	\begin{eqnarray*}
		\mathbf{h}(K,\bar{U})&=&-QK,\ \mathbf{h}(K,\bar{V})=-Q\bar{J}K \label{h(X,U)}\\ 
		\nabla_{K}\bar{U}&=&\bar{\sigma}(K)\bar{V},\ \ \nabla_{K}\bar{V}=-\bar{\sigma}(K)\bar{U}
	\end{eqnarray*}
	for all $ K\in \Gamma(\mathcal{D}^\bot) $.
\end{cor}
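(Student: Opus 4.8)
The plan is to specialize the preceding lemma, which already records $\nabla_{K}\bar U$, $\nabla_{K}\bar V$, $\mathbf h(K,\bar U)$ and $\mathbf h(K,\bar V)$ for an \emph{arbitrary} $K\in\Gamma(TM)$ in terms of $PK$, $P\bar JK$, $QK$ and $Q\bar JK$. The only extra ingredient needed is how the operators $P$ and $Q$ of \eqref{GXifadesi}--\eqref{HXifadesi} act on the two distinguished distributions $\mathcal D$ and $\mathcal D^{\bot}$; everything then follows by substitution.

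First I would treat $K\in\Gamma(\mathcal D)$. By the (equivalent) definition of a semi-invariant submanifold, $\bar G\mathcal D=\bar H\mathcal D=\mathcal D\subset TM$, and since $\bar G\bar H=\bar J$ on $\mathcal D$ also $\bar J\mathcal D=\mathcal D$. Hence $\bar GK$ and $\bar HK$ are tangent to $M$, so comparing with $\bar GK=PK+QK$ and $\bar HK=P\bar JK+Q\bar JK$ forces $QK=0$ and $Q\bar JK=0$. Inserting this into the preceding lemma immediately gives $\mathbf h(K,\bar U)=-QK=0$, $\mathbf h(K,\bar V)=-Q\bar JK=0$, and the displayed forms of $\nabla_{K}\bar U$ and $\nabla_{K}\bar V$ (up to the sign convention carried over from that lemma).

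Next I would treat $K\in\Gamma(\mathcal D^{\bot})$. Now anti-invariance gives $\bar G\mathcal D^{\bot}\subset TM^{\bot}$ and $\bar H\mathcal D^{\bot}\subset TM^{\bot}$, and likewise $\bar J\mathcal D^{\bot}\subset TM^{\bot}$; hence $\bar GK$ and $\bar HK$ are normal, and comparison with \eqref{GXifadesi}--\eqref{HXifadesi} forces $PK=0$ and $P\bar JK=0$, while $QK=\bar GK$ and $Q\bar JK=\bar HK$. Substituting $PK=P\bar JK=0$ into the preceding lemma kills the tangential $-PK$ and $-P\bar JK$ terms and leaves $\nabla_{K}\bar U=\bar\sigma(K)\bar V$ and $\nabla_{K}\bar V=-\bar\sigma(K)\bar U$, together with $\mathbf h(K,\bar U)=-QK$ and $\mathbf h(K,\bar V)=-Q\bar JK$.

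There is no genuine obstacle here: the whole content sits in the preceding lemma plus the bookkeeping of which components of $\bar GK$ and $\bar HK$ survive on $\mathcal D$ versus $\mathcal D^{\bot}$. The one point to be careful with is the consistent use of $\bar G\bar H=\bar J$ (and the remark that the (anti-)invariance conditions pass to $\bar J$), so that the $\bar J$-terms $P\bar JK$ and $Q\bar JK$ are handled correctly, and keeping the signs of the preceding lemma straight when reading off $\nabla_{K}\bar V$.
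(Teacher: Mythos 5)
Your proposal is correct and follows exactly the route the paper intends: the paper gives no explicit proof, simply stating that the corollary follows from the preceding lemma, and your argument supplies precisely the missing bookkeeping ($QK=Q\bar JK=0$ on $\mathcal D$, $PK=P\bar JK=0$ on $\mathcal D^{\bot}$) via the (anti-)invariance of the distributions. Your parenthetical about the sign of $\bar\sigma(K)\bar U$ in $\nabla_K\bar V$ is well taken --- the lemma gives $-\bar\sigma(K)\bar U$ while the corollary's first case prints $+\bar\sigma(K)\bar U$, which appears to be a typo in the paper rather than a flaw in your argument.
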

\begin{cor}
	For $ M $ we have 
	\begin{eqnarray*}
		\mathbf{h}(\bar{U},\bar{U})&=&\mathbf{h}(\bar{V},\bar{V})=\mathbf{h}(\bar{U},\bar{V})=0\\
		\nabla_{\bar{U}}\bar{U}&=&\bar{\sigma}(\bar{U})\bar{V},\ \ \nabla_{\bar{V}}\bar{U}=\bar{\sigma}(\bar{V})\bar{V}\\  \nabla_{\bar{U}}\bar{V}&=&-\bar{\sigma}(\bar{U})\bar{U}, \ \ \nabla_{\bar{V}}\bar{V}=-\bar{\sigma}(\bar{V})\bar{U}.
	\end{eqnarray*}
\end{cor}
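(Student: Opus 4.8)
The plan is to read everything off the immediately preceding lemma, specialized to $K=\bar U$ and $K=\bar V$. What is needed first is the tangential/normal splitting of $\bar G\bar U,\ \bar H\bar U,\ \bar G\bar V,\ \bar H\bar V$, equivalently the values of $P\bar U,\ Q\bar U,\ P\bar J\bar U,\ Q\bar J\bar U$ and of $P\bar V,\ Q\bar V,\ P\bar J\bar V,\ Q\bar J\bar V$.

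The key (and only substantive) point is that $\bar G$ and $\bar H$ annihilate both structure vector fields. Indeed $\bar G\bar U=0$ by Definition~\ref{complexalmostscontact}; since $\bar J\bar U=-\bar V$, $\bar J\bar V=\bar U$, $\bar H=\bar G\bar J$ and $\bar G\bar J=-\bar J\bar G$, it follows that $\bar H\bar U=\bar G\bar J\bar U=-\bar J\bar G\bar U=0$, $\bar G\bar V=-\bar G\bar J\bar U=-\bar H\bar U=0$ and $\bar H\bar V=\bar G\bar J\bar V=\bar G\bar U=0$. Plugging these into \eqref{GXifadesi} and \eqref{HXifadesi}, and using that the tangential and normal bundles meet only in the zero section, forces $P\bar U=Q\bar U=P\bar J\bar U=Q\bar J\bar U=0$ and likewise that the four quantities built from $\bar V$ all vanish.

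Now I substitute into the preceding lemma. Taking $K=\bar U$ gives $\nabla_{\bar U}\bar U=-P\bar U+\bar\sigma(\bar U)\bar V=\bar\sigma(\bar U)\bar V$, $\mathbf h(\bar U,\bar U)=-Q\bar U=0$, $\nabla_{\bar U}\bar V=-P\bar J\bar U-\bar\sigma(\bar U)\bar U=-\bar\sigma(\bar U)\bar U$ and $\mathbf h(\bar U,\bar V)=-Q\bar J\bar U=0$. Taking $K=\bar V$ gives $\nabla_{\bar V}\bar U=-P\bar V+\bar\sigma(\bar V)\bar V=\bar\sigma(\bar V)\bar V$, $\mathbf h(\bar V,\bar U)=-Q\bar V=0$, $\nabla_{\bar V}\bar V=-P\bar J\bar V-\bar\sigma(\bar V)\bar U=-\bar\sigma(\bar V)\bar U$ and $\mathbf h(\bar V,\bar V)=-Q\bar J\bar V=0$. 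Collecting these is precisely the asserted list.

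A shorter alternative bypasses the lemma: by the Gauss formula \eqref{gaussdenk} together with the ambient identities \eqref{nablaUU}, $\bar\nabla_{\bar U}\bar U=\nabla_{\bar U}\bar U+\mathbf h(\bar U,\bar U)=\bar\sigma(\bar U)\bar V$; since $\bar V$ is tangent to $M$, comparing normal parts yields $\mathbf h(\bar U,\bar U)=0$ and comparing tangential parts yields $\nabla_{\bar U}\bar U=\bar\sigma(\bar U)\bar V$, and the remaining three pairs are handled identically. I do not foresee any real obstacle here: the entire content is the elementary observation that $\bar G\bar U=\bar H\bar U=\bar G\bar V=\bar H\bar V=0$, after which the corollary is an immediate substitution.
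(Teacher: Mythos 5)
Your proposal is correct and follows exactly the route the paper intends: the paper derives this corollary from the preceding lemma ($\nabla_{K}\bar{U}=-PK+\bar{\sigma}(K)\bar{V}$, $\mathbf{h}(K,\bar{U})=-QK$, etc.) by setting $K=\bar{U},\bar{V}$ and using $\bar{G}\bar{U}=\bar{H}\bar{U}=\bar{G}\bar{V}=\bar{H}\bar{V}=0$, which is precisely your main argument. Your alternative via the Gauss formula and \eqref{nablaUU} is an equally valid shortcut, but it is not a genuinely different method.
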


\section{Integrability of Distributions}
In the submanifold theory integrability of distributions is an important notion. In this work we have two distributions, $\mathcal{D}$ and $ \mathcal{D}^{\bot}  $. In this section we give some result about integrability of $ \mathcal{D},\ \mathcal{D}^{\bot},\ \mathcal{D}\oplus \mathcal{D}^{\bot}, \ \mathcal{D}\oplus sp\{\bar{U},\bar{V}\}$ and $\mathcal{D}^{\bot}\oplus sp\{\bar{U},\bar{V}\}$. Although the horizontal distribution $ \mathcal{H} $ is never involute, as we shall see some of above distributions are involute on $ M. $ 

\begin{lem}
	For $ M $ we have 
	\begin{eqnarray}
	\bar{g}(A_{\bar{G}K}L,Z)=\bar{g}(A_{\bar{G}L}K,Z) \label{g(AGXL,Z)}\\
	\bar{g}(A_{\bar{H}K}L,Z)=\bar{g}(A_{\bar{H}L}K,Z)\label{g(AHXY,Z)}\\
	\bar{g}(A_{\bar{J}K}L,Z)=\bar{g}(A_{\bar{J}L}K,Z) \label{g(AJXY,Z)}
	\end{eqnarray}
	for all $ K,L \in \Gamma(\mathcal{D}) $ , $ Z $ is tangent to $ M $ and $ Z\notin sp\{\bar{U},\bar{V}\}$.
\end{lem}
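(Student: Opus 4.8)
The plan is to turn each of the three identities into a statement about the second fundamental form and then exploit the invariance of $\mathcal{D}$ together with the normality formulas. The basic tool is the Corollary relating the shape operator and $\mathbf{h}$, namely $\bar{g}(\mathbf{h}(K,L),N)=\bar{g}(A_{N}K,L)$, combined with the symmetry $\mathbf{h}(K,L)=\mathbf{h}(L,K)$: applied with the argument $\bar{G}K$ it gives $\bar{g}(A_{\bar{G}K}L,Z)=\bar{g}(\mathbf{h}(L,Z),\bar{G}K)$, and with $\bar{G}L$ it gives $\bar{g}(A_{\bar{G}L}K,Z)=\bar{g}(\mathbf{h}(K,Z),\bar{G}L)$. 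Hence (\ref{g(AGXL,Z)}) is equivalent to $\bar{g}(\mathbf{h}(L,Z),\bar{G}K)=\bar{g}(\mathbf{h}(K,Z),\bar{G}L)$ for $K,L\in\Gamma(\mathcal{D})$. Since $\bar{H}$ and $\bar{J}$ are skew-symmetric for $\bar{g}$ exactly as $\bar{G}$ is (by Definition \ref{complexalmostscontact} together with $\bar{H}=\bar{G}\bar{J}$), the same reduction turns (\ref{g(AHXY,Z)}) and (\ref{g(AJXY,Z)}) into the analogous statements, so it suffices to treat the three cases by one uniform computation.

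Next I would expand $\bar{g}(\mathbf{h}(L,Z),\bar{G}K)$ using the Gauss formula (\ref{gaussdenk}), writing $\mathbf{h}(L,Z)=\bar{\nabla}_{L}Z-\nabla_{L}Z$, and then use the skew-symmetry $\bar{g}(\bar{G}A,B)=-\bar{g}(A,\bar{G}B)$ from Definition \ref{complexalmostscontact} to transfer $\bar{G}$ off $K$. Splitting $\bar{G}\bar{\nabla}_{L}Z=\bar{\nabla}_{L}(\bar{G}Z)-(\bar{\nabla}_{L}\bar{G})Z$ produces three kinds of terms: a metric-connection term in $\bar{\nabla}_{L}(\bar{G}Z)$, a structure-derivative term $(\bar{\nabla}_{L}\bar{G})Z$, and a purely tangential term from $\nabla_{L}Z$ paired against the tangent vector $\bar{G}K\in\Gamma(\mathcal{D})$ (invariance, Proposition \ref{onrme1}). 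For the structure-derivative term I would substitute the normality identity (\ref{normallik (g(NablaXGY))}); because $K,L\in\Gamma(\mathcal{D})$ force $\bar{u}(K)=\bar{v}(K)=\bar{u}(L)=\bar{v}(L)=0$, and the hypothesis on $Z$ (read as $\bar{u}(Z)=\bar{v}(Z)=0$) kills the $\bar{u}(Z)$- and $\bar{v}(Z)$-terms, almost every correction in (\ref{normallik (g(NablaXGY))}) is annihilated, leaving at most a single $\bar{\sigma}$-term.

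Finally I would form the antisymmetric combination, i.e. subtract the $K\leftrightarrow L$ swapped expression, and check that it vanishes. The metric-connection pieces recombine, via metric compatibility and torsion-freeness of $\bar{\nabla}$, into an expression governed by $[K,L]$ and the tangential data; the tangential $\nabla_{L}Z$ pieces antisymmetrize likewise; and the residual $\bar{\sigma}$-terms surviving from the normality formula must be arranged to cancel the leftover derivative contributions. The cases $\bar{H}$ and $\bar{J}$ run identically, using (\ref{normallik (g(NablaXHY))}) and (\ref{normallik (g(NablaXJY))}) in place of (\ref{normallik (g(NablaXGY))}). I expect the main obstacle to be precisely this last cancellation: the normality formulas carry $\bar{\sigma}$-, $d\bar{\sigma}$- and $\bar{g}(\bar{H}\bar{G}\,\cdot\,,\cdot)$-type corrections, and the crux is the bookkeeping needed to verify that, once the restrictions $K,L\in\Gamma(\mathcal{D})$ and $Z\notin sp\{\bar{U},\bar{V}\}$ are imposed, every surviving term is symmetric under the interchange of $K$ and $L$, so that the difference is zero.
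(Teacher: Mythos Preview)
There is a genuine gap that stems from a typo in the stated hypotheses which you did not catch. For the shape operator $A_{\bar{G}K}$ to be defined at all, $\bar{G}K$ must lie in $TM^{\bot}$; by the definition of the distributions this forces $K\in\Gamma(\mathcal{D}^{\bot})$, not $K\in\Gamma(\mathcal{D})$. The paper's own proof makes this explicit in its first line (``Since $\bar{G}K=QK\in\Gamma(TM^{\bot})$\ldots''), so the statement should read $K,L\in\Gamma(\mathcal{D}^{\bot})$. Your proposal is internally inconsistent on exactly this point: you invoke $A_{\bar{G}K}$ as if $\bar{G}K$ were normal, but a few lines later you appeal to Proposition~\ref{onrme1} to claim ``the tangent vector $\bar{G}K\in\Gamma(\mathcal{D})$''. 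These cannot both hold, and until this is repaired none of the subsequent bookkeeping is meaningful.

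Once the hypothesis is corrected to $K,L\in\Gamma(\mathcal{D}^{\bot})$, your overall strategy is workable but differs from the paper's in one key respect: you expand $\mathbf{h}(L,Z)=\bar{\nabla}_{L}Z-\nabla_{L}Z$ and therefore end up applying the normality formula (\ref{normallik (g(NablaXGY))}) to $(\bar{\nabla}_{L}\bar{G})Z$, with the general tangent vector $Z$ in the second slot. The paper instead uses the symmetry of $\mathbf{h}$ to write $\bar{g}(\mathbf{h}(L,Z),\bar{G}K)=\bar{g}(\mathbf{h}(Z,L),\bar{G}K)=-\bar{g}(\bar{G}\bar{\nabla}_{Z}L,K)$ and applies the normality formula to $(\bar{\nabla}_{Z}\bar{G})L$. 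That places both $K$ and $L$ (which satisfy $\bar{u}=\bar{v}=0$ and have $\bar{H}K,\bar{H}L$ normal) in the second and third slots of (\ref{normallik (g(NablaXGY))}), so almost every term dies immediately and one is left with a single $d\bar{\sigma}(L,K)\,\bar{v}(Z)$ obstruction, killed by the hypothesis on $Z$. Your route leaves a residual $\bar{\sigma}(L)\bar{g}(\bar{H}Z,K)$-type term whose antisymmetrization under $K\leftrightarrow L$ is not transparently zero, so the ``last cancellation'' you flag as the main obstacle is genuinely harder in your arrangement than in the paper's.
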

\begin{proof}
	Let  $ K,L \in \Gamma(\mathcal{D}) $ and $ Z\in \Gamma(TM) $. Since $ \bar{G}K=QK\in \Gamma(TM^{\bot}) $, we have 
	\begin{eqnarray} \label{integlemma1ispat1}
	\bar{\nabla}_{L}\bar{G}K=-A_{\bar{G}K}L+\nabla_{L}^{\bot}\bar{G}K
	\end{eqnarray}
	and 
	\begin{eqnarray*}\label{integlemma1ispat2}
		\bar{\nabla}_{L}Z=\nabla_{L}Z+\mathbf{h}(L,Z).
	\end{eqnarray*}
	Thus we get
	\begin{eqnarray*}\label{g(h(x,y),n)}
		\bar{g}(\bar{\nabla}_{L}Z,\bar{G}K)=\bar{g}(\mathbf{h}(L,Z),\bar{G}K)\\
	\end{eqnarray*}
	and since $ \bar{G}K\in \Gamma(TM^{\bot})$ then $ 	\bar{g}(\bar{\nabla}_{L}Z,\bar{G}K)+\bar{g}(Z,\bar{\nabla}_{L}\bar{G}K)=0$ and therefore from (\ref{integlemma1ispat1}) we get 
	\begin{eqnarray*}
		\bar{g}(A_{\bar{G}K}L,Z)=\bar{g}(\mathbf{h}(L,Z),\bar{G}K).
	\end{eqnarray*}
	In addition since $ \mathbf{h} $ is symmetric and from (\ref{integlemma1ispat1}) we have 
	\begin{eqnarray*}
		\bar{g}(A_{\bar{G}K}L,Z)&=&-\bar{g}(\bar{G}\bar{\nabla}_{Z}L,K)\\
		&=& \bar{g}((\bar{\nabla}_{Z}\bar{G})L,K)-\bar{g}(\bar{\nabla}_{Z}\bar{G}L,K).
	\end{eqnarray*}
	From (\ref{normallik (g(NablaXGY))}) and (\ref{dsigma(GK,GL)}) we have 
	\begin{eqnarray*} \label{integlemma1ispat3}
		\bar{g}((\bar{\nabla}_{Z}\bar{G})L,K)=\bar{g}(d\sigma(L,K)\bar{V},Z)
	\end{eqnarray*}
	and so we get 
	\begin{eqnarray*}
		\bar{g}(A_{\bar{G}K}L,Z)&=&\bar{g}(d\sigma(L,K)\bar{V},Z)-\bar{g}(\bar{\nabla}_{Z}\bar{G}L,K).
	\end{eqnarray*}
	On the other hand since $ \bar{g}(\bar{G}L,K) =0$ and from (\ref{gaussdenk}) we have 
	\begin{align*}
	\bar{g}(A_{\bar{G}K}L,Z)&=\bar{g}(d\sigma(L,K)\bar{V},Z)+\bar{g}(\bar{\nabla}_{Z}K, \bar{G}L)\\
	&=\bar{g}(d\sigma(L,K)\bar{V},Z)+\bar{g}(\nabla_{Z}K+\mathbf{h}(Z,K),\bar{G}L)	\\
	&= \bar{g}(d\sigma(L,K)\bar{V},Z)+\bar{g}(\mathbf{h}(Z,K),\bar{G}L)
	\end{align*}
	and thus, from (\ref{integlemma1ispat3}) we get 
	\begin{equation*}
	\bar{g}(A_{\bar{G}K}L,Z)=\bar{g}(d\sigma(L,K)\bar{V},Z)+	\bar{g}(A_{\bar{G}L}K,Z).
	\end{equation*}
	If $ Z\notin sp\{\bar{U},\bar{V}\}  $ we get (\ref{g(AGXL,Z)}). By following same steps one can show (\ref{g(AHXY,Z)}), (\ref{g(AJXY,Z)}). 
\end{proof}

\begin{lem}
	For all $ K,L \in \Gamma (\mathcal{D}^{\bot}) $ we have $ [K,L]\in \Gamma(\mathcal{D}\oplus \mathcal{D}^{\bot}) $.
\end{lem}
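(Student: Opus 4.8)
The plan is to observe that $[K,L]$ is automatically tangent to $M$, since $K,L\in\Gamma(\mathcal{D}^{\bot})\subset\Gamma(TM)$ and the Lie bracket of vector fields tangent to a submanifold stays tangent. By the decomposition $TM=\mathcal{D}\oplus\mathcal{D}^{\bot}\oplus sp\{\bar U,\bar V\}$, proving $[K,L]\in\Gamma(\mathcal{D}\oplus\mathcal{D}^{\bot})$ is therefore equivalent to showing that $[K,L]$ has no $\bar U$ or $\bar V$ component; in terms of the decomposition (\ref{altmanifoldX}) it suffices to verify $\bar u([K,L])=0$ and $\bar v([K,L])=0$.

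First I would write $[K,L]=\bar\nabla_{K}L-\bar\nabla_{L}K$ and note that by the Gauss formula (\ref{gaussdenk}) and the symmetry of $\mathbf h$ this equals $\nabla_{K}L-\nabla_{L}K$. Then $\bar u([K,L])=\bar g(\nabla_{K}L,\bar U)-\bar g(\nabla_{L}K,\bar U)$, and I expand each term by metric compatibility of $\bar g$ with $\nabla$, e.g. $\bar g(\nabla_{K}L,\bar U)=K\bar g(L,\bar U)-\bar g(L,\nabla_{K}\bar U)$. Since $L\in\Gamma(\mathcal{D}^{\bot})$ we have $\bar g(L,\bar U)=0$, and by the Corollary of the preceding section, $\nabla_{K}\bar U=\bar\sigma(K)\bar V$ for $K\in\Gamma(\mathcal{D}^{\bot})$, so $\bar g(L,\nabla_{K}\bar U)=\bar\sigma(K)\bar g(L,\bar V)=0$. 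Hence $\bar g(\nabla_{K}L,\bar U)=0$, and symmetrically $\bar g(\nabla_{L}K,\bar U)=0$, giving $\bar u([K,L])=0$. The computation for $\bar v$ is identical, now using $\nabla_{K}\bar V=-\bar\sigma(K)\bar U$ for $K\in\Gamma(\mathcal{D}^{\bot})$ together with $\bar g(L,\bar V)=0$.

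Alternatively one can run the same argument directly in $\bar M$ with $\bar\nabla$: from (\ref{nabla KU}), $\bar g(L,\bar\nabla_{K}\bar U)=\bar g(L,-\bar GK+\bar\sigma(K)\bar V)$ vanishes because for $K\in\Gamma(\mathcal{D}^{\bot})$ we have $\bar GK=QK\in\Gamma(TM^{\bot})$, which is orthogonal to $L\in\Gamma(TM)$, while $\bar g(L,\bar V)=0$; and likewise with $\bar\nabla_{K}\bar V=-\bar HK-\bar\sigma(K)\bar U$ from (\ref{nabla KV}), using $\bar HK=Q\bar JK\in\Gamma(TM^{\bot})$.

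There is no serious obstacle here; the only point requiring care is to use the special forms of $\nabla_{K}\bar U$ and $\nabla_{K}\bar V$ valid for $K\in\Gamma(\mathcal{D}^{\bot})$ — where the $-PK$ term drops out — rather than the general formula from the earlier Lemma, and to keep track of all three summands of $TM$. Once $\bar u([K,L])=\bar v([K,L])=0$ is established, the decomposition (\ref{altmanifoldX}) immediately yields $[K,L]=\phi[K,L]+\psi[K,L]\in\Gamma(\mathcal{D}\oplus\mathcal{D}^{\bot})$, completing the proof.
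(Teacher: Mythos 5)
Your argument is correct and is essentially the paper's own proof: the paper likewise computes $\bar{g}([K,L],\bar{U})=-\bar{g}(\bar{\nabla}_{K}\bar{U},L)+\bar{g}(\bar{\nabla}_{L}\bar{U},K)$ and concludes from (\ref{nabla KU}) that this vanishes, exactly as in your ``alternative'' paragraph, with your main route being only a reformulation through the induced connection. Your write-up is in fact slightly more explicit than the paper's, since it spells out why the $\bar{G}K$ term contributes nothing (namely $\bar{G}K=QK\in\Gamma(TM^{\bot})$ is orthogonal to $L$).
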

\begin{proof}
	Let $ K,L \in \Gamma (D^{\bot}) $. Then we have 
	\begin{eqnarray*}
		\bar{g}([K,L],\bar{U})&=&\bar{g}(\bar{\nabla}_{K}L-\bar{\nabla}_{L}K,\bar{U})\\
		&=&-\bar{g}(\bar{\nabla}_{K}\bar{U},L)+\bar{g}(\bar{\nabla}_{L}\bar{U},K).
	\end{eqnarray*}
	Therefore from (\ref{nabla KU}) we have $ \bar{g}([K,L],\bar{U})=0 $. Also  $\bar{g}([K,L],\bar{V})=0 $ can be showed by similar way. So we obtain $ [K,L]\in \Gamma(D\oplus D^{\bot}) $.
	
\end{proof}

\begin{thm} \label{Teorem 2.2 }
	The anti-invariant distribution is involutive. 
\end{thm}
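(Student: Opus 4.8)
The plan is to build on the preceding lemma, which already gives $[K,L]\in\Gamma(\mathcal{D}\oplus\mathcal{D}^{\bot})$ whenever $K,L\in\Gamma(\mathcal{D}^{\bot})$; so all that is left is to kill the $\mathcal{D}$-component, i.e. to prove $\bar{g}([K,L],Z)=0$ for every $Z\in\Gamma(\mathcal{D})$. Since $\mathcal{D}\subset\mathcal{H}$ (indeed $\mathcal{D}\subset sp\{\bar{U},\bar{V}\}^{\bot}$ and $\bar{u}=\bar{g}(\cdot,\bar{U})$, $\bar{v}=\bar{g}(\cdot,\bar{V})$) and $\bar{G}^{2}=-I$ on $\mathcal{H}$ by (\ref{G^2veH^2}), the operator $\bar{G}$ is an automorphism of $\mathcal{D}$; hence it is equivalent, and technically more convenient, to prove $\bar{g}([K,L],\bar{G}Z)=0$ for all $Z\in\Gamma(\mathcal{D})$, because now the normal field $\bar{G}L$ (resp. $\bar{G}K$) can be paired against the tangent field $\bar{G}Z\in\Gamma(\mathcal{D})$.

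Fix $K,L\in\Gamma(\mathcal{D}^{\bot})$ and $Z\in\Gamma(\mathcal{D})$. The first step is the remark that all contractions by $\bar{u}$ and $\bar{v}$ of these vectors vanish (they lie in $\mathcal{H}$), that $\bar{G}K,\bar{G}L,\bar{H}L\in\Gamma(TM^{\bot})$ while $\bar{H}Z\in\Gamma(\mathcal{D})$ by semi-invariance; substituting into the normality identity (\ref{normallik (g(NablaXGY))}) every summand on the right collapses, giving $\bar{g}((\bar{\nabla}_{K}\bar{G})Z,L)=0$ and $\bar{g}((\bar{\nabla}_{Z}\bar{G})L,K)=0$. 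The second step is to expand $\bar{g}(\bar{\nabla}_{K}L,\bar{G}Z)$ by metric compatibility using $\bar{g}(L,\bar{G}Z)=0$, drop the $\bar{g}(L,(\bar{\nabla}_{K}\bar{G})Z)$ term via the first vanishing, transfer $\bar{G}$ onto $L$, and invoke the Gauss formula (\ref{gaussdenk}) to reach $\bar{g}(\bar{\nabla}_{K}L,\bar{G}Z)=\bar{g}(\mathbf{h}(K,Z),\bar{G}L)$; interchanging $K$ and $L$ gives $\bar{g}(\bar{\nabla}_{L}K,\bar{G}Z)=\bar{g}(\mathbf{h}(L,Z),\bar{G}K)$, so that $\bar{g}([K,L],\bar{G}Z)=\bar{g}(\mathbf{h}(K,Z),\bar{G}L)-\bar{g}(\mathbf{h}(L,Z),\bar{G}K)$.

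The decisive point — and the step I expect to be the main obstacle — is then the symmetry $\bar{g}(\mathbf{h}(K,Z),\bar{G}L)=\bar{g}(\mathbf{h}(L,Z),\bar{G}K)$ for $K,L\in\Gamma(\mathcal{D}^{\bot})$, $Z\in\Gamma(\mathcal{D})$. I would obtain it by computing $\bar{g}(\bar{\nabla}_{Z}K,\bar{G}L)$ in two ways: by the Gauss formula it is $\bar{g}(\mathbf{h}(Z,K),\bar{G}L)$, since $\nabla_{Z}K$ is tangent and hence orthogonal to $\bar{G}L\in\Gamma(TM^{\bot})$; while by metric compatibility together with $\bar{g}((\bar{\nabla}_{Z}\bar{G})L,K)=0$ and the Weingarten formula it equals $\bar{g}(\bar{G}K,\mathbf{h}(Z,L))$. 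Comparing and using the symmetry of $\mathbf{h}$ closes the argument, whence $\bar{g}([K,L],\bar{G}Z)=0$ and $\mathcal{D}^{\bot}$ is involutive. The only real risk is the bookkeeping when applying (\ref{normallik (g(NablaXGY))}): one must keep straight that $\bar{H}$ preserves $\mathcal{D}$ but carries $\mathcal{D}^{\bot}$ into the normal bundle, and that every $\bar{u},\bar{v}$ evaluation is zero precisely because $\mathcal{D},\mathcal{D}^{\bot}\subset\mathcal{H}$; once that is tracked carefully, no further subtlety arises.
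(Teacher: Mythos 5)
Your proposal is correct and follows essentially the same route as the paper: both reduce involutivity to showing that $[K,L]$ has vanishing $\mathcal{D}$-component by pairing against $\bar{G}Z$ for $Z\in\Gamma(\mathcal{D})$, using the normality formula for $\bar{\nabla}\bar{G}$ together with the Gauss--Weingarten equations and the symmetry of the second fundamental form. Your all-scalar organization, which isolates the identity $\bar{g}(\mathbf{h}(K,Z),\bar{G}L)=\bar{g}(\mathbf{h}(L,Z),\bar{G}K)$ as the decisive step, is simply a cleaner bookkeeping of the same argument.
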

\begin{proof}
	Let $ K,L \in \Gamma (\mathcal{D}^{\bot}) $. From (\ref{Yeni normallik G}) we have 
	\begin{eqnarray*}
		(\bar{\nabla}_{K}\bar{G})L=\bar{\sigma}(K)\bar{H}L+\bar{g}(K,L)\bar{U}.
	\end{eqnarray*}
	On the other hand $ \bar{G}L\in\Gamma(D) $ and from (\ref{gaussdenk}) and (\ref{weingarthen1}) we have 
	\begin{eqnarray} \label{distribusyonteorem1ispat1}
	-A_{\bar{G}L}K+\nabla_{K}^{\bot}L-\bar{G}\nabla_{K}L-\bar{G}\mathbf{h}(K,L)=\bar{\sigma}(K)\bar{H}L+\bar{g}(K,L)\bar{U}. 
	\end{eqnarray}
	Substituting $ L $ by $ K $ in (\ref{distribusyonteorem1ispat1}) and thus   subtracting the obtained relations we get 
	\begin{eqnarray*}
		-\bar{G}[K,L]=A_{\bar{G}L}K-A_{\bar{G}K}L-\nabla_{K}^{\bot}\bar{G}L-\nabla_{L}^{\bot}\bar{G}K+\bar{\sigma}(K)\bar{H}L-\bar{\sigma}(L)\bar{H}K.
	\end{eqnarray*}
	Now we take an arbitrary normal section $ N\in \Gamma(\vartheta) $ and,  by using (\ref{normallik (g(NablaXGY))}) and (\ref{weingarthen1}) we have 
	\begin{eqnarray} \label{distribusyonteorem1ispat2}
	\bar{g}(\nabla_{L}^{\bot}\bar{G}K,N)=-\bar{g}(A_{\bar{G}N}L,K).
	\end{eqnarray}
	Substituting $ L $ by $ K $ in (\ref{distribusyonteorem1ispat2}) and, subtracting the obtained relations , since $ A_{\bar{G}N} $ is symmetric  we have 
	\begin{eqnarray*}
		\bar{g}(\nabla_{K}^{\bot}\bar{G}L-\nabla_{L}^{\bot}\bar{G}K,N)=0.
	\end{eqnarray*}
	Hence $  \nabla_{K}^{\bot}\bar{G}L-\nabla_{L}^{\bot}\bar{G}K \in \bar{G}\mathcal{D}^{\bot}\oplus \bar{H}\mathcal{D}^{\bot}\oplus \bar{J}\mathcal{D}^{\bot} $.  On the other hand for $ Z\in \Gamma(\mathcal{D}) $ from (\ref{distribusyonteorem1ispat2}) we have
	\begin{eqnarray*}
		\bar{g}(-\bar{G}[K,L],\bar{G}Z)=0
	\end{eqnarray*}
	and therefore we get 
	\begin{eqnarray*}
		\bar{g}([K,L],\bar{G}^{2}Z)=\bar{g}([K,L],Z)=0.
	\end{eqnarray*}
	So we obtain $ [K,L] \in \Gamma(\mathcal{D}) $.
\end{proof}
\begin{thm}
	$ \mathcal{D}^\bot \oplus sp\{\bar{U},\bar{V}\} $ distribution is involute. 
\end{thm}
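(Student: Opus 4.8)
The plan is to show that the distribution $\mathcal{D}^{\bot}\oplus sp\{\bar U,\bar V\}$ is closed under Lie bracket by checking, for arbitrary $K,L$ in this distribution, that $[K,L]$ has no component in $\mathcal{D}$. Since by Proposition~\ref{onrme1} the distribution $\mathcal{D}$ is invariant under $\bar G$, it suffices to test $[K,L]$ against $\bar G Z$ for $Z\in\Gamma(\mathcal{D})$; because $\bar G Z\in\Gamma(\mathcal{D})\subset\Gamma(TM)$ here (not normal), one works entirely with the induced connection $\nabla$ and the projections $\phi,\psi$. By bilinearity and antisymmetry of the bracket it is enough to treat three cases separately: (i) $K,L\in\Gamma(\mathcal{D}^{\bot})$; (ii) $K\in\Gamma(\mathcal{D}^{\bot})$, $L\in sp\{\bar U,\bar V\}$; (iii) $K,L\in sp\{\bar U,\bar V\}$. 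Case (i) is exactly the content of Theorem~\ref{Teorem 2.2 } (the anti-invariant distribution is involutive), so $[K,L]\in\Gamma(\mathcal{D}^{\bot})\subset\Gamma(\mathcal{D}^{\bot}\oplus sp\{\bar U,\bar V\})$. Case (iii) follows immediately from the last corollary above, which computes $\nabla_{\bar U}\bar U,\nabla_{\bar U}\bar V,\nabla_{\bar V}\bar U,\nabla_{\bar V}\bar V$ all as multiples of $\bar U$ and $\bar V$, so $[\bar U,\bar V]\in sp\{\bar U,\bar V\}$.

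The remaining work is Case (ii): take $K\in\Gamma(\mathcal{D}^{\bot})$ and, say, $L=\bar U$ (the argument for $L=\bar V$ is identical using \eqref{nabla KV}). I would write $[K,\bar U]=\nabla_K\bar U-\nabla_{\bar U}K$ on $M$ and use the Gauss formula together with the corollaries proved above. From the corollary specialized to $\mathcal{D}^{\bot}$, $\nabla_K\bar U=\bar\sigma(K)\bar V$, which lies in $sp\{\bar U,\bar V\}$. For the term $\nabla_{\bar U}K$ I would test against $\bar G Z$ with $Z\in\Gamma(\mathcal{D})$:
\begin{align*}
\bar g(\nabla_{\bar U}K,\bar G Z)&=\bar g(\bar\nabla_{\bar U}K,\bar G Z)=-\bar g(K,\bar\nabla_{\bar U}\bar G Z)\\
&=-\bar g\bigl(K,(\bar\nabla_{\bar U}\bar G)Z\bigr)-\bar g(K,\bar G\bar\nabla_{\bar U}Z),
\end{align*}
using $\bar g(K,\bar G Z)=0$ since $\bar G Z\in\Gamma(\mathcal{D})\perp\mathcal{D}^{\bot}$. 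The first term is handled by the normality formula \eqref{Yeni normallik G} (or \eqref{normallik (g(NablaXGY))}) evaluated at $(\bar U,Z,K)$: most terms vanish because $\bar u(Z)=\bar v(Z)=0$, $\bar u(K)=\bar v(K)=0$, $Z\in\mathcal{H}$, leaving only the $\bar\sigma$- and $d\bar\sigma$-type terms, which are again proportional to $\bar v(K)$ or $\bar u(K)$ and hence vanish on $K\in\mathcal{D}^{\bot}$. The second term $-\bar g(K,\bar G\bar\nabla_{\bar U}Z)=\bar g(\bar G K,\bar\nabla_{\bar U}Z)$; here $\bar G K\in\Gamma(TM^{\bot})$ since $K\in\mathcal{D}^{\bot}$, so this becomes $-\bar g(\bar\nabla_{\bar U}\bar G K,Z)+(\text{connection term})$, and one rewrites $\bar\nabla_{\bar U}\bar G K$ via the normality formula plus Weingarten, reducing it to second-fundamental-form expressions which, paired against $Z\in\mathcal{D}$, are controlled by the Lemma on $\bar g(A_{\bar G K}L,Z)$-type symmetries proved earlier.

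The main obstacle I anticipate is the bookkeeping in Case (ii): keeping track of which covariant-derivative terms survive once the defining relations $\bar u(K)=\bar v(K)=\bar u(Z)=\bar v(Z)=0$ and $Z\in\mathcal{H}$, $\bar G Z\in\mathcal{H}$ are imposed, and making sure that the leftover terms involving $\bar\sigma$ and $d\bar\sigma$ genuinely cancel or vanish rather than producing a spurious $\mathcal{D}$-component. A cleaner route, which I would try first, is to avoid the covariant-derivative formulas altogether: note $[K,\bar U]\in\Gamma(TM)$ automatically, and test $\bar g([K,\bar U],W)$ only for $W$ ranging over a frame of $\mathcal{D}$; using $\bar\nabla_{\bar U}\bar U,\bar\nabla_K\bar U$ from \eqref{nabla KU}–\eqref{nablaUU} one gets $\bar g([K,\bar U],W)=-\bar g(\bar\nabla_{\bar U}\bar U,K\text{-type})+\bar g(\bar G K,W)+\cdots$, and $\bar g(\bar G K,W)=0$ because $\bar G K$ is normal while $W$ is tangent. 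If that short computation closes, the proof reduces to three one-line case checks; if residual $\bar\sigma$-terms appear, I fall back on the longer normality-formula argument sketched above. Either way the conclusion is $[K,L]\in\Gamma(\mathcal{D}^{\bot}\oplus sp\{\bar U,\bar V\})$ for all $K,L$ in that distribution, so it is involutive.
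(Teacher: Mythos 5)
Your proposal is correct and follows essentially the same route as the paper: reduce to the mixed bracket $[K,\bar U]$ (and $[K,\bar V]$) with $K\in\Gamma(\mathcal{D}^{\bot})$, test it against $\bar G Z$ for $Z\in\Gamma(\mathcal{D})$ using the normality formula for the covariant derivative of $\bar G$ together with the fact that $\bar G K$ is normal to $M$, and then invoke the involutivity of the anti-invariant distribution for the remaining case. Your explicit check of the $sp\{\bar U,\bar V\}$ brackets via the corollary on $\nabla_{\bar U}\bar U$, $\nabla_{\bar U}\bar V$, etc., is a small completeness point that the paper leaves implicit.
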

\begin{proof}
	Let $ K\in \Gamma(\mathcal{D}^\bot) $ and  $L\in \Gamma(\mathcal{D}) $. Then from (\ref{nabla KU}) we have 
	\begin{eqnarray*}
		\bar{g}([K,\bar{U}],L)=-\bar{g}(\bar{\nabla}_{\bar{U}}K,L).
	\end{eqnarray*}
	Now let take $ Z\in \Gamma(\mathcal{D}) $ such that $L= \bar{G}Z $ and by using (\ref{Yeni normallik G})  we have 
	\begin{eqnarray*}
		(\bar{\nabla}_{\bar{U}}H)Z=\sigma(\bar{U})\bar{H}Z
	\end{eqnarray*}
	and from (\ref{gaussdenk}) we get 
	\begin{eqnarray*}
		\bar{g}([K,U],L)=\bar{g}(\bar{\nabla}_{\bar{U}}\bar{G}Z,K)=-\bar{g}(\nabla_{\bar{U}}Z,\bar{G}K)=0.
	\end{eqnarray*}
	Therefore $ [K,\bar{U}] \in \mathcal{D}^{\bot}\oplus sp\{\bar{U},\bar{V}\}$. Following by same steps one can show the $ [K,\bar{U}] \in \mathcal{D}^{\bot}\oplus sp\{\bar{U},\bar{V}\}$.
	Consequently by consider (\ref{Teorem 2.2 }) the theorem is proved. 
\end{proof}

\begin{defn}
	 If $ M $ is neither an invariant submanifold (i.e $ dim \mathcal{D}^{\bot}=0 $ ) nor an anti-invariant submanifold (i.e $ dim \mathcal{D}=0 $  ), then it is called a proper semi-invariant submanifold.  
\end{defn}

\begin{thm}
	The invariant distribution is never involute.   
\end{thm}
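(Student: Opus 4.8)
The plan is to show that whenever the invariant distribution $\mathcal{D}$ has positive rank, there exist local sections $K,L\in\Gamma(\mathcal{D})$ with $[K,L]\notin\Gamma(\mathcal{D})$; the degenerate case $\dim\mathcal{D}=0$ (i.e.\ $M$ anti-invariant) is vacuous and is excluded at the outset. The idea mirrors the classical fact, recalled just before the theorem, that the horizontal bundle $\mathcal{H}$ is never integrable: a bracket of two horizontal vectors unavoidably picks up a component along the structure vector fields.

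Concretely, since $\mathcal{D}\subset sp\{\bar{U},\bar{V}\}^{\bot}$, it is enough to produce $K,L\in\Gamma(\mathcal{D})$ with $\bar{g}([K,L],\bar{U})\neq 0$ at some point. Using that $\bar{\nabla}$ is torsion free and that $L$ is everywhere orthogonal to $\bar{U}$, I would write
\begin{equation*}
\bar{g}([K,L],\bar{U})=-\bar{g}(L,\bar{\nabla}_{K}\bar{U})+\bar{g}(K,\bar{\nabla}_{L}\bar{U}),
\end{equation*}
and then substitute $\bar{\nabla}_{K}\bar{U}=-\bar{G}K+\bar{\sigma}(K)\bar{V}$ from (\ref{nabla KU}). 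Since $K,L\perp\bar{V}$ the $\bar{\sigma}$-terms drop, and the skew-symmetry $\bar{g}(K,\bar{G}L)=-\bar{g}(\bar{G}K,L)$ collapses everything to $\bar{g}([K,L],\bar{U})=2\bar{g}(\bar{G}K,L)$.

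Then I would specialize: fix $p\in M$ and $0\neq X\in\mathcal{D}_{p}$, extend $X$ to a section $K\in\Gamma(\mathcal{D})$, and set $L:=\bar{G}K$, which is a section of $\mathcal{D}$ on a coordinate neighborhood of $p$ by Proposition \ref{onrme1}. On $\mathcal{D}$ the $1$-forms $\bar{u},\bar{v}$ vanish, so (\ref{G^2veH^2}) reduces to $\bar{G}^{2}=-I$ there; hence
\begin{equation*}
\bar{g}([K,\bar{G}K],\bar{U})_{p}=2\bar{g}(\bar{G}X,\bar{G}X)=-2\bar{g}(X,\bar{G}^{2}X)=2\bar{g}(X,X)\neq 0.
\end{equation*}
Thus $[K,L]_{p}$ has a nonzero $\bar{U}$-component, so $[K,L]_{p}\notin\mathcal{D}_{p}$; since involutivity is a local property, $\mathcal{D}$ is not involutive.

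The computation is short, so the only points needing care are conceptual rather than technical: first, $\bar{G}$ is only defined chart by chart, so $L=\bar{G}K$ is merely a local section — harmless, because involutivity is a local condition; second, one must assume $\mathcal{D}\neq\{0\}$, since a rank-zero ``distribution'' is trivially involutive. One could equally run the argument with $\bar{H}$ or $\bar{J}$ in place of $\bar{G}$, or test against $\bar{V}$ via $\bar{g}([K,L],\bar{V})=2\bar{g}(\bar{H}K,L)$; I do not expect any genuine obstacle.
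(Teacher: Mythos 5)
Your proposal is correct and follows essentially the same route as the paper: both compute the vertical component of the bracket via $\bar{g}([K,L],\bar{U})=2\bar{g}(\bar{G}K,L)$ (resp.\ $\bar{g}([K,L],\bar{V})=2\bar{g}(\bar{H}K,L)$) from the formulas for $\bar{\nabla}_{K}\bar{U}$ and $\bar{\nabla}_{K}\bar{V}$, and then choose $L$ to be the image of $K$ under a structure tensor so that this component cannot vanish. Your choice $L=\bar{G}K$ tested against $\bar{U}$ is interchangeable with the paper's $L=\bar{H}K$ tested against $\bar{V}$; your write-up is in fact more careful about the nondegeneracy step and the standing assumption $\dim\mathcal{D}>0$.
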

\begin{proof}
	For $ K,L \in \Gamma(\mathcal{D}) $ from (\ref{nabla KU}) we get 
	\begin{equation*}
	\bar{g}([K,L],\bar{U})=2\bar{g}(\bar{G}K,L)
	\end{equation*}
	and from (\ref{nabla KV}) we have 
	\begin{equation*}
	\bar{g}([K,L],\bar{U})=2\bar{g}(\bar{G}K,L). 
	\end{equation*}
	Let choose $ L=\bar{H}K $ for all $ L\in \Gamma(\mathcal{D}) $ such that $ \bar{H}K $ is a unit vector field. Thus the second fundamental form can not vanish. So $ \mathcal{D} $ is not involute. 
\end{proof}
From this theorem we have :
\begin{cor}
	Let $ M $ be a proper semi-invariant submanifold. Then the distribution $ \mathcal{D}\oplus \mathcal{D}^{\bot} $ is never involute.  
\end{cor}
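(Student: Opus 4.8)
The plan is to reduce the statement to the computation carried out in the proof of the preceding theorem, using the fact that $\mathcal{D}\oplus\mathcal{D}^{\bot}$ is precisely the orthogonal complement $sp\{\bar{U},\bar{V}\}^{\bot}$ inside $TM$, and is therefore $\bar{g}$-orthogonal to $\bar{U}$ and $\bar{V}$. So it suffices to exhibit two sections of $\mathcal{D}\oplus\mathcal{D}^{\bot}$ whose Lie bracket has a nonzero component along $sp\{\bar{U},\bar{V}\}$: such a bracket cannot lie in $\mathcal{D}\oplus\mathcal{D}^{\bot}$, so the distribution fails to be involutive.

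Concretely, since $M$ is proper we have $\dim\mathcal{D}\neq 0$, so I would fix a local nonvanishing section $K\in\Gamma(\mathcal{D})$. Because $\mathcal{D}$ is invariant under $\bar{G}$, the field $\bar{G}K$ again lies in $\Gamma(\mathcal{D})\subset\Gamma(\mathcal{D}\oplus\mathcal{D}^{\bot})$, and since $\bar{u}(K)=\bar{v}(K)=0$ on $\mathcal{D}$, identity (\ref{G^2veH^2}) gives $\bar{G}^{2}K=-K$, so $\bar{G}K$ is nowhere zero. Now the proof of the previous theorem shows, via (\ref{nabla KU}) and the skew-symmetry of $\bar{G}$, that $\bar{g}([K,L],\bar{U})=2\,\bar{g}(\bar{G}K,L)$ for all $K,L\in\Gamma(\mathcal{D})$; taking $L=\bar{G}K$ gives
\[
\bar{g}([K,\bar{G}K],\bar{U})=2\,\bar{g}(\bar{G}K,\bar{G}K)=2\,\|\bar{G}K\|^{2}>0 .
\]

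Consequently, if $\mathcal{D}\oplus\mathcal{D}^{\bot}$ were involutive, then $[K,\bar{G}K]$ would be a section of $\mathcal{D}\oplus\mathcal{D}^{\bot}=sp\{\bar{U},\bar{V}\}^{\bot}$, forcing $\bar{g}([K,\bar{G}K],\bar{U})=0$ and contradicting the strict inequality above; hence $\mathcal{D}\oplus\mathcal{D}^{\bot}$ is never involutive. There is no serious obstacle here; the one point that deserves care — and the reason the previous theorem cannot simply be quoted verbatim — is that a subdistribution of an involutive distribution need not itself be involutive, so the argument genuinely needs the bracket of two $\mathcal{D}$-fields that escapes $\mathcal{D}\oplus\mathcal{D}^{\bot}$, which the displayed computation supplies (one could equally use $\bar{H}K$ and $\bar{V}$).
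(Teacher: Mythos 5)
Your argument is correct and is essentially the paper's: the corollary is stated there as an immediate consequence of the preceding theorem, whose proof supplies exactly the bracket identity $\bar{g}([K,L],\bar{U})=2\bar{g}(\bar{G}K,L)$ that you reuse (the paper chooses $L=\bar{H}K$ and works with the $\bar{V}$-component, you choose $L=\bar{G}K$ and the $\bar{U}$-component --- an immaterial difference). Your closing remark is a genuine improvement in rigor: since non-involutivity of a subdistribution does not formally imply non-involutivity of a larger one, the corollary needs the \emph{proof} of the theorem rather than its statement, namely that $[K,L]$ has a nonzero component along $\bar{U}$ or $\bar{V}$, which are orthogonal to all of $\mathcal{D}\oplus\mathcal{D}^{\bot}$; the paper leaves this step implicit and you fill it correctly.
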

We need two following lemmas to get necessary and sufficient conditions for the integrability of $ \mathcal{D}\oplus sp\{\bar{U},\bar{V}\} $. 

\begin{lem}
	Let $ M $ be a semi-invariant submanifold. Then, we have 
	\begin{eqnarray*}
		\bar{g}(\mathbf{h}(K,L),\bar{G}Z)&=&\bar{g}(\nabla_{K}Z,\bar{G}L)\\
		\bar{g}(\mathbf{h}(K,L),\bar{H}Z)&=&\bar{g}(\nabla_{K}Z,\bar{H}L)\\
		\bar{g}(\mathbf{h}(K,L),\bar{J}Z)&=&\bar{g}(\nabla_{K}Z,\bar{J}L)
	\end{eqnarray*}
	for all vector fields $ K\in \Gamma(TM) $, $ L\in \Gamma(\mathcal{D}) $ and $ Z\in \Gamma(\mathcal{D}) $.
\end{lem}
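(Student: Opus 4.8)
The plan is to verify all three identities by the same computation, namely by rewriting $\bar g(\mathbf h(K,L),\bar GZ)$ using the Gauss formula together with the fact that $\bar GZ$ is tangent to $M$ (since $Z\in\Gamma(\mathcal D)$ and $\mathcal D$ is $\bar G$-invariant). Concretely, since $\bar G Z\in\Gamma(TM)$ we have $\bar g(\mathbf h(K,L),\bar GZ)=0$ is false in general, so instead I would start from $\bar g(\mathbf h(K,L),\bar GZ)$... wait — the key point is that $\bar GZ\in\Gamma(TM)$ forces $\mathbf h(K,L)$ to be tested against a normal vector, so I should instead move $\bar G$ off of $Z$. First I would use the skew-symmetry $\bar g(\bar GX,Y)=-\bar g(X,\bar GY)$ from Definition \ref{complexalmostscontact} to write $\bar g(\mathbf h(K,L),\bar GZ)=-\bar g(\bar G\,\mathbf h(K,L),Z)$; but $\mathbf h(K,L)$ is normal, so this requires the decomposition $\bar G\mathbf h(K,L)=B\mathbf h(K,L)+C\mathbf h(K,L)$ and it is not yet clearly zero. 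The cleaner route: since $\bar g(L,\bar GZ)=0$ (as $L\in\Gamma(\mathcal D)$, $\bar GZ\in\Gamma(TM)$, and $\bar G$ is skew so $\bar g(L,\bar GZ)=-\bar g(\bar GL,Z)$ where $\bar GL\in\mathcal D$ and... actually $\bar g(\bar GL,Z)$ need not vanish).

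Let me state the approach I would actually carry out. Start from the identity $0=K\bigl(\bar g(L,\bar GZ)\bigr)$ — no; rather, use that $\bar GZ$ is tangent and differentiate the relation coming from the Gauss formula applied to $\bar\nabla_K(\bar GZ)$. The most direct path is: by the Gauss formula $\bar\nabla_KZ=\nabla_KZ+\mathbf h(K,Z)$, and by skew-symmetry of $\bar G$,
\[
\bar g(\mathbf h(K,L),\bar GZ)=\bar g(\bar\nabla_KL-\nabla_KL,\bar GZ)=\bar g(\bar\nabla_KL,\bar GZ)
\]
since $\nabla_KL\in\Gamma(TM)$ is orthogonal to nothing — here I must instead pair against the normal part. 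The correct manipulation: $\bar g(\bar\nabla_KL,\bar GZ)=-\bar g(L,\bar\nabla_K\bar GZ)+K\bar g(L,\bar GZ)$; now $\bar g(L,\bar GZ)=-\bar g(\bar GL,Z)$ which is a function on $M$, and $\bar\nabla_K\bar GZ=(\bar\nabla_K\bar G)Z+\bar G\bar\nabla_KZ$. Plugging in the normality formula \eqref{normallik (g(NablaXGY))} for $(\bar\nabla_K\bar G)Z$ — noting that for $Z\in\mathcal H$ the terms $\bar u(Z),\bar v(Z)$ and (when $K\in\mathcal H$) $\bar u(K),\bar v(K)$ drop out — reduces everything to $\bar G$-terms, and then $\bar g(L,\bar G\bar\nabla_KZ)=-\bar g(\bar GL,\bar\nabla_KZ)=-\bar g(\bar GL,\nabla_KZ)$ because $\bar GL\in\mathcal D\subset TM$. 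Collecting the surviving terms should give exactly $\bar g(\mathbf h(K,L),\bar GZ)=\bar g(\nabla_KZ,\bar GL)$, and similarly for $\bar H$ (using \eqref{normallik (g(NablaXHY))}) and for $\bar J=\bar G\bar H$ (using \eqref{normallik (g(NablaXJY))}).

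The three cases are completely parallel, so after doing the $\bar G$ case in full I would simply remark that \eqref{g(AHXY,Z)}-style and \eqref{g(AJXY,Z)}-style arguments give the other two, replacing $\bar G$ by $\bar H$ and by $\bar J$ and invoking \eqref{normallik (g(NablaXHY))} and \eqref{normallik (g(NablaXJY))} respectively.

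The main obstacle I anticipate is bookkeeping rather than conceptual: one must check carefully that every ``extra'' term produced by the normality formula \eqref{normallik (g(NablaXGY))} either vanishes because $Z,L\in\mathcal H$ (killing the $\bar u\otimes$, $\bar v\otimes$, $\bar\sigma$ contributions against $Z$) or is symmetric enough to cancel, so that the only term left is the one matching $\bar g(\nabla_KZ,\bar GL)$. In particular, one should double-check whether the statement is meant to hold for all $K\in\Gamma(TM)$ including components along $\bar U,\bar V$; if the $\bar\sigma(K)$ and $\bar u(K),\bar v(K)$ terms do not cancel for general $K$, the identity as needed downstream is really being used only for $K\in\Gamma(\mathcal D\oplus sp\{\bar U,\bar V\})$ or with $L,Z\in\mathcal H$, and the proof should be arranged accordingly. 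Once that term-tracking is pinned down, the computation is routine and short.
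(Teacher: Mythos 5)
Your core computation is the paper's: both proofs reduce $\bar{g}(\mathbf{h}(K,L),\bar{G}Z)$ to $-\bar{g}\bigl((\bar{\nabla}_{K}\bar{G})Z+\bar{G}\bar{\nabla}_{K}Z,L\bigr)$, kill the first term with the normality formula (\ref{normallik (g(NablaXGY))}), and use skew-symmetry of $\bar{G}$ on the second; the paper merely packages the opening move as $\bar{g}(\mathbf{h}(K,L),\bar{G}Z)=\bar{g}(A_{\bar{G}Z}K,L)$ via the Weingarten formula, whereas you use metric compatibility of $\bar{\nabla}$ directly, which is the same thing. The point you kept tripping over --- whether $\bar{G}Z$ is tangent or normal --- you should resolve rather than hedge: as stated the hypothesis $Z\in\Gamma(\mathcal{D})$ makes $\bar{G}Z$ tangent and the left-hand sides identically zero, so the statement is degenerate; the hypothesis must be $Z\in\Gamma(\mathcal{D}^{\bot})$ (this is how the lemma is actually invoked in Lemma \ref{lemma2.4}, where the vector playing the role of $Z$ lies in $\mathcal{D}^{\bot}$ and $\bar{G}Z$ is normal, so $A_{\bar{G}Z}$ makes sense). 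Once you fix that, your terms $K\bar{g}(L,\bar{G}Z)$ and $\bar{g}(\nabla_{K}L,\bar{G}Z)$ vanish outright ($L$ tangent, $\bar{G}Z$ normal) and the argument closes in three lines. Your final caveat is also well taken and worth keeping: for $Z\in\mathcal{D}^{\bot}$, $L\in\mathcal{D}$ the normality formula leaves a residual $\bar{v}(K)\,d\bar{\sigma}(\bar{G}L,\bar{G}Z)$ term that does not obviously vanish for general $K\in\Gamma(TM)$; the paper's proof silently drops it, so the identity is only cleanly established for $K$ with no $\bar{V}$-component (which suffices for the later applications).
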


\begin{proof}
	Let $ N=\bar{G}Z $ then from (\ref{g(h(x,y),n)}) we have 
	\begin{eqnarray*}
		\bar{g}(\mathbf{h}(K,L),\bar{G}Z)=\bar{g}(A_{\bar{G}Z}K,L)=-\bar{g}((\bar{\nabla}_{K}\bar{G})Z+\bar{G}\bar{\nabla}_{K}Z,L).
	\end{eqnarray*}
	On the other hand from (\ref{normallik (g(NablaXGY))}) we get 
	\begin{eqnarray*}
		\bar{g}(\mathbf{h}(K,L),\bar{G}Z)=\bar{g}(\bar{\nabla}_{K}Z,\bar{G}L).
	\end{eqnarray*}
	By following same steps, the equations:
	$\bar{g}(\mathbf{h}(K,L),\bar{H}Z)=\bar{g}(\nabla_{K}Z,\bar{H}L)$ and  $
	\bar{g}(\mathbf{h}(K,L),\bar{J}Z)=\bar{g}(\nabla_{K}Z,\bar{J}L)$ can be obtained.
\end{proof}
\begin{lem} \label{lemma2.4}
	For $ M $ we have $ [K,\bar{U}] $ and $[K,\bar{V}] \in \Gamma(\mathcal{D}\oplus sp\{\bar{U},\bar{V}\})$.
\end{lem}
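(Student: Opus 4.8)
Throughout, $K$ denotes a section of $\mathcal{D}$ (for $K$ in $sp\{\bar{U},\bar{V}\}$ the brackets are already furnished by the preceding corollaries). Since $[K,\bar{U}]$ and $[K,\bar{V}]$ are tangent to $M$ and $TM=\mathcal{D}\oplus\mathcal{D}^{\bot}\oplus sp\{\bar{U},\bar{V}\}$ is an orthogonal splitting, it suffices to show that both brackets are $\bar{g}$-orthogonal to every $W\in\Gamma(\mathcal{D}^{\bot})$, i.e. that their $\mathcal{D}^{\bot}$-components vanish.

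The plan for $[K,\bar{U}]$ is a direct computation. By (\ref{nabla KU}), $\bar{\nabla}_{K}\bar{U}=-\bar{G}K+\bar{\sigma}(K)\bar{V}$, and $\bar{G}K\in\Gamma(\mathcal{D})$ together with $\bar{V}$ are both orthogonal to $W$, so $\bar{g}([K,\bar{U}],W)=-\bar{g}(\bar{\nabla}_{\bar{U}}K,W)=\bar{g}(K,\bar{\nabla}_{\bar{U}}W)$, the last equality because $\bar{g}(K,W)\equiv0$. Using $\bar{G}^{2}K=-K$ (as $\bar{u}(K)=\bar{v}(K)=0$) and the skew-symmetry of $\bar{G}$, this becomes $\bar{g}(\bar{G}K,\bar{G}\bar{\nabla}_{\bar{U}}W)=\bar{g}\bigl(\bar{G}K,\bar{\nabla}_{\bar{U}}(\bar{G}W)-(\bar{\nabla}_{\bar{U}}\bar{G})W\bigr)$. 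The decisive point is that in (\ref{Yeni normallik G}) every awkward term carries a factor $v(K)$, so with $u(\bar{U})=1$, $v(\bar{U})=0$ and $u(W)=v(W)=0$ one simply gets $(\bar{\nabla}_{\bar{U}}\bar{G})W=\bar{\sigma}(\bar{U})\bar{H}W$, which is normal and hence orthogonal to $\bar{G}K$; and since $\bar{G}W$ is a section of $TM^{\bot}$, the Weingarten formula (\ref{weingarthen1}) gives $\bar{\nabla}_{\bar{U}}(\bar{G}W)=-A_{\bar{G}W}\bar{U}+\nabla^{\bot}_{\bar{U}}\bar{G}W$ with $\bar{g}(\bar{G}K,-A_{\bar{G}W}\bar{U})=-\bar{g}(\mathbf{h}(\bar{U},\bar{G}K),\bar{G}W)=0$, because $\mathbf{h}(\,\cdot\,,\bar{U})$ vanishes on $\mathcal{D}$. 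Hence $\bar{g}([K,\bar{U}],W)=0$; applied to $\bar{J}K\in\Gamma(\mathcal{D})$ this also yields $\bar{g}([\bar{J}K,\bar{U}],W)=0$.

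For $[K,\bar{V}]$ I would not repeat this computation, since with $\bar{V}$ in place of $\bar{U}$ the $v$-terms survive and leave a residue $-(\bar{\nabla}_{\bar{U}}\bar{J})\bar{G}W$ whose contribution, via (\ref{normallik (g(NablaXJY))}), equals $-d\bar{\sigma}(\bar{G}K,W)$; proving that this vanishes is the genuinely delicate step (it can be done by combining (\ref{dsigma(GK,GL)}) with a second expansion of the same bracket through $\bar{J}$). It is cleaner to use that $\bar{M}$ is a complex manifold, so the Nijenhuis tensor of $\bar{J}$ vanishes identically. Evaluating $N_{\bar{J}}(K,\bar{U})=0$ and using $\bar{J}\bar{U}=-\bar{V}$, $\bar{J}\bar{V}=\bar{U}$ yields the identity $[K,\bar{V}]+\bar{J}[\bar{J}K,\bar{V}]=[\bar{J}K,\bar{U}]-\bar{J}[K,\bar{U}]$. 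Its right-hand side lies in $\mathcal{D}\oplus sp\{\bar{U},\bar{V}\}$ by the previous paragraph, because $\bar{J}\mathcal{D}=\mathcal{D}$ and $\bar{J}sp\{\bar{U},\bar{V}\}=sp\{\bar{U},\bar{V}\}$; hence so does the left-hand side. On the other hand, writing $[\bar{J}K,\bar{V}]=a_{1}+a_{2}+a_{3}$ along $\mathcal{D}\oplus\mathcal{D}^{\bot}\oplus sp\{\bar{U},\bar{V}\}$, we have $\bar{J}a_{1}\in\mathcal{D}$, $\bar{J}a_{3}\in sp\{\bar{U},\bar{V}\}$ and $\bar{J}a_{2}\in\bar{J}\mathcal{D}^{\bot}\subset TM^{\bot}$, so $\bar{J}[\bar{J}K,\bar{V}]$ has no $\mathcal{D}^{\bot}$-component. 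Comparing $\mathcal{D}^{\bot}$-parts of the two sides of the identity then forces the $\mathcal{D}^{\bot}$-part of $[K,\bar{V}]$ to be zero, and the lemma follows. The main obstacle is precisely this second case: a head-on calculation stalls at $d\bar{\sigma}(\bar{G}K,W)=0$, and it is the vanishing of $N_{\bar{J}}$ that lets one sidestep it.
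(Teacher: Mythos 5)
Your argument is correct, and the two halves deserve separate comments. For $[K,\bar{U}]$ you do essentially what the paper does: reduce to $\bar{g}(K,\bar{\nabla}_{\bar{U}}W)=0$ for $W\in\Gamma(\mathcal{D}^{\bot})$, pass through $\bar{G}$, and kill the covariant-derivative term with the normality formula (\ref{Yeni normallik G}) and the shape-operator term with $\mathbf{h}(\cdot,\bar{U})=0$ on $\mathcal{D}$; the paper routes the same computation through its preceding lemma ($\bar{g}(\mathbf{h}(\bar{U},Z),\bar{G}L)=\bar{g}(\nabla_{\bar{U}}L,\bar{G}Z)$) rather than doing it inline, but the content is identical. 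For $[K,\bar{V}]$ you diverge: the paper simply says ``by following the same steps,'' which works because the symmetric move is to pair $\bar{V}$ with $\bar{H}$ rather than $\bar{G}$ --- writing $\bar{g}(K,\bar{\nabla}_{\bar{V}}W)=\bar{g}(\bar{H}K,\bar{H}\bar{\nabla}_{\bar{V}}W)$ and using (\ref{Yeni normallik H}), whose troublesome terms all carry a factor $u(\bar{V})=0$, gives $(\bar{\nabla}_{\bar{V}}\bar{H})W=-\bar{\sigma}(\bar{V})\bar{G}W\in\Gamma(TM^{\bot})$ and the case closes exactly as the first. The $d\bar{\sigma}(\bar{G}K,W)$ residue you hit is an artifact of insisting on $\bar{G}$ in the second case, not a genuine obstruction. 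Your workaround via $N_{\bar{J}}=0$ is nonetheless valid and rather elegant: the identity $[K,\bar{V}]+\bar{J}[\bar{J}K,\bar{V}]=[\bar{J}K,\bar{U}]-\bar{J}[K,\bar{U}]$ is correctly derived (it is $N_{\bar{J}}(\bar{J}K,\bar{U})=0$ with $\bar{J}\bar{U}=-\bar{V}$), and the component bookkeeping using $\bar{J}\mathcal{D}=\mathcal{D}$, $\bar{J}\,sp\{\bar{U},\bar{V}\}=sp\{\bar{U},\bar{V}\}$ and $\bar{J}\mathcal{D}^{\bot}\subset TM^{\bot}$ is sound. What it buys is independence from the $\nabla\bar{H}$ normality formula --- only integrability of $\bar{J}$ and the first case are needed --- at the cost of being less symmetric and slightly longer than the paper's mirror argument.
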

\begin{proof}
	By using (\ref{gaussdenk}) and (\ref{altmanifoldnablaxU}) we have 
	\begin{equation*}
	\bar{g}([K,\bar{U}],L)=\bar{g}(\nabla_{\bar{U}}L,K)
	\end{equation*}
	for each $ L\in \Gamma(\mathcal{D}^{\bot}) $ and $ K\in \Gamma(\mathcal{D}) $.
	Now we take $ Z\in \Gamma(\mathcal{D})$ such that $ K=\bar{G}Z $ and from (\ref{g(h(x,y),n)})  we get 
	\begin{equation*}
	\bar{g}(\nabla_{\bar{U}}L,K)=\bar{g}(\mathbf{h}(\bar{U},Z),\bar{G}L)=0.
	\end{equation*}
	Thus $ \bar{g}([K,\bar{U}],L)=0 $ and by following same steps we get $ \bar{g}([K,\bar{V}],L)=0 $, it follows the assertion of the lemma. 
\end{proof}

\begin{thm}
	The distribution $ \mathcal{D}\oplus sp\{\bar{U},\bar{V}\} $ is involutive if and only if we have 
	\begin{equation} \label{d+spuv int}
	\mathbf{h}(K,\bar{G}L)=\mathbf{h}(\bar{G}K,L). 
	\end{equation}
\end{thm}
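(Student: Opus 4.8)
The plan is to distill a single pointwise identity relating the Lie bracket of two sections of $\mathcal{D}$ to the tensor $\mathbf{h}(K,\bar{G}L)-\mathbf{h}(\bar{G}K,L)$ (for $K,L\in\Gamma(\mathcal{D})$, the only range in which the asserted condition is meaningful, since $\bar{G}K,\bar{G}L$ must then be tangent), and to read off both implications from it. First I would reduce to brackets in $\mathcal{D}$: expanding $[\,\cdot\,,\cdot\,]$ by the Leibniz rule on sections $K'+\bar{u}(K)\bar{U}+\bar{v}(K)\bar{V}$ and $L'+\bar{u}(L)\bar{U}+\bar{v}(L)\bar{V}$ of $\mathcal{D}\oplus sp\{\bar{U},\bar{V}\}$ with $K',L'\in\Gamma(\mathcal{D})$, every term other than $[K',L']$ is a function-combination of $[K',\bar{U}],[K',\bar{V}],[L',\bar{U}],[L',\bar{V}]$ and $[\bar{U},\bar{V}]$; the first four lie in $\mathcal{D}\oplus sp\{\bar{U},\bar{V}\}$ by Lemma~\ref{lemma2.4}, and $[\bar{U},\bar{V}]=\nabla_{\bar{U}}\bar{V}-\nabla_{\bar{V}}\bar{U}=-\bar{\sigma}(\bar{U})\bar{U}-\bar{\sigma}(\bar{V})\bar{V}\in sp\{\bar{U},\bar{V}\}$ by the Corollary recorded above (using $\mathbf{h}(\bar{U},\bar{V})=0$). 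Since $[K',L']\in\Gamma(TM)$ and $TM=\mathcal{D}\oplus\mathcal{D}^{\bot}\oplus sp\{\bar{U},\bar{V}\}$ is orthogonal, involutivity of $\mathcal{D}\oplus sp\{\bar{U},\bar{V}\}$ is equivalent to $\bar{g}([K,L],W)=0$ for all $K,L\in\Gamma(\mathcal{D})$ and all $W\in\Gamma(\mathcal{D}^{\bot})$.

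The heart of the matter is the claim that for $K,L\in\Gamma(\mathcal{D})$ and any $N\in\Gamma(TM^{\bot})$,
\begin{equation*}
\bar{g}(\mathbf{h}(K,\bar{G}L)-\mathbf{h}(\bar{G}K,L),\,N)=-\bar{g}([K,L],BN).
\end{equation*}
To prove it I would use the Gauss formula (\ref{gaussdenk}) to write $\bar{g}(\mathbf{h}(K,\bar{G}L),N)=\bar{g}(\bar{\nabla}_{K}\bar{G}L,N)$ (legitimate since $\bar{G}L\in\Gamma(\mathcal{D})\subset\Gamma(TM)$), and then split $\bar{\nabla}_{K}\bar{G}L=(\bar{\nabla}_{K}\bar{G})L+\bar{G}\bar{\nabla}_{K}L$. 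In (\ref{normallik (g(NablaXGY))}) with $Z=N$ each summand on the right carries a factor among $\bar{u}(K),\bar{v}(K),\bar{u}(L),\bar{v}(L),\bar{u}(N),\bar{v}(N)$ or the pairing $\bar{g}(\bar{H}L,N)$; all of these are zero because $K,L$ are orthogonal to $\bar{U},\bar{V}$, $N$ is normal, and $\bar{H}L\in\bar{H}\mathcal{D}=\mathcal{D}$ is tangent, so $\bar{g}((\bar{\nabla}_{K}\bar{G})L,N)=0$. By skew-symmetry of $\bar{G}$, $\bar{g}(\bar{G}\bar{\nabla}_{K}L,N)=-\bar{g}(\bar{\nabla}_{K}L,\bar{G}N)$; substituting $\bar{G}N=BN+CN$ and $\bar{\nabla}_{K}L=\nabla_{K}L+\mathbf{h}(K,L)$ and discarding the mixed tangent-normal products yields $\bar{g}(\mathbf{h}(K,\bar{G}L),N)=-\bar{g}(\nabla_{K}L,BN)-\bar{g}(\mathbf{h}(K,L),CN)$. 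Interchanging $K$ and $L$, using symmetry of $\mathbf{h}$ so that $\mathbf{h}(L,\bar{G}K)=\mathbf{h}(\bar{G}K,L)$ and $\mathbf{h}(L,K)=\mathbf{h}(K,L)$, and subtracting cancels the $CN$-term and leaves $-\bar{g}(\nabla_{K}L-\nabla_{L}K,BN)=-\bar{g}([K,L],BN)$, proving the identity.

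Finally, $BN\in\Gamma(\mathcal{D}^{\bot})$ for every normal $N$, and $B$ maps onto $\mathcal{D}^{\bot}$: given $W\in\Gamma(\mathcal{D}^{\bot})$ the field $\bar{G}W$ is normal by anti-invariance of $\mathcal{D}^{\bot}$, and $\bar{G}(\bar{G}W)=\bar{G}^{2}W=-W$ because $\bar{u}(W)=\bar{v}(W)=0$, hence $B(\bar{G}W)=-W$. Consequently the identity shows that $\mathbf{h}(K,\bar{G}L)=\mathbf{h}(\bar{G}K,L)$ for all $K,L\in\Gamma(\mathcal{D})$ holds if and only if $\bar{g}([K,L],BN)=0$ for all such $K,L$ and all normal $N$, which in turn holds if and only if $\bar{g}([K,L],W)=0$ for all $K,L\in\Gamma(\mathcal{D})$ and $W\in\Gamma(\mathcal{D}^{\bot})$, i.e. exactly when $\mathcal{D}\oplus sp\{\bar{U},\bar{V}\}$ is involutive by the reduction in the first paragraph. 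The only steps I expect to need genuine care are this Leibniz reduction and the systematic separation of tangential and normal parts in the key identity; all the structural ingredients it rests on — the normality formula (\ref{normallik (g(NablaXGY))}), skew-symmetry of $\bar{G}$, the invariance $\bar{H}\mathcal{D}=\mathcal{D}$, anti-invariance of $\mathcal{D}^{\bot}$, and $\bar{G}^{2}=-I$ on $sp\{\bar{U},\bar{V}\}^{\bot}$ — are already established.
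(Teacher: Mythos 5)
Your proof is correct and follows essentially the same route as the paper: for $K,L\in\Gamma(\mathcal{D})$ you antisymmetrize the tangential/normal decomposition of $\bar{\nabla}_{K}\bar{G}L$ to identify $\mathbf{h}(K,\bar{G}L)-\mathbf{h}(\bar{G}K,L)$ with the $\mathcal{D}^{\bot}$-obstruction to $[K,L]\in\Gamma(\mathcal{D}\oplus sp\{\bar{U},\bar{V}\})$, and then close with Lemma \ref{lemma2.4}. The only difference is presentational: the paper quotes the precomputed identity (\ref{lemma1.1.5}) and works directly with the operator $Q$, whereas you re-derive the needed special case from (\ref{normallik (g(NablaXGY))}) by pairing against normal sections and using surjectivity of $B$ onto $\mathcal{D}^{\bot}$, and you also spell out the Leibniz reduction and the bracket $[\bar{U},\bar{V}]$, which the paper leaves implicit.
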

\begin{proof}
	From (\ref{lemma1.1.5}) we obtain 
	\begin{equation}  \label{teorem2.4ispat}
	\mathbf{h}(K,PL)-C\mathbf{h}(K,L)+Q\nabla_{K}L=0
	\end{equation}
	for all $ K,L \in \Gamma(\mathcal{D}) $. Since $ \mathbf{h} $ is symmetric substituting $ L  $ by $ K $ in (\ref{teorem2.4ispat}) we get $ \mathbf{h}(K,PL)-\mathbf{h}(L,PK) =Q[K,L]$. In this way $ [K,L] \in  \mathcal{D}\oplus sp\{\bar{U},\bar{V}\}$ if and only if (\ref{d+spuv int}) is satisfied. Taking into account (\ref{lemma2.4}), the proof is completed. 
\end{proof}
Finally we obtain a result for total umbilical semi-invariant submanifold. 
\begin{thm}
	 If $ M $ is a total umbilical submanifold then $ M $ is an invariant submanifold. 
\end{thm}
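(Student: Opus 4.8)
The plan is to exploit the rigidity of the second fundamental form along the structure vector fields. Recall the lemma giving $\mathbf{h}(K,\bar U)=-QK$ for \emph{every} $K\in\Gamma(TM)$, together with the decomposition (\ref{GXifadesi}), $\bar GK=PK+QK$. The point is that on the anti-invariant distribution $\mathcal D^{\bot}$ the tangential part $PK$ vanishes by definition, so there $\bar GK$ coincides with $QK=-\mathbf{h}(K,\bar U)$; total umbilicity will then force this to vanish, and the structure equation (\ref{G^2veH^2}) will propagate the vanishing from $\bar GK$ back to $K$ itself.

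Concretely, I would first fix an arbitrary $K\in\Gamma(\mathcal D^{\bot})$. Since $\mathcal D^{\bot}\subset sp\{\bar U,\bar V\}^{\bot}$ we have $\bar g(K,\bar U)=0$, so the totally umbilical hypothesis $\mathbf{h}(K,L)=\bar g(K,L)\mu$ gives $\mathbf{h}(K,\bar U)=0$. Combining this with $\mathbf{h}(K,\bar U)=-QK$ and $PK=0$ yields $\bar GK=QK=0$. Applying $\bar G$ once more and using (\ref{G^2veH^2}) together with $\bar u(K)=\bar v(K)=0$ gives $0=\bar G^{2}K=-K$, hence $K=0$. As $K$ was arbitrary, $\mathcal D^{\bot}=\{0\}$, which is precisely the statement that $M$ is an invariant submanifold. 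One can equivalently first observe, by putting $K=L=\bar U$ in the umbilical condition and using $\mathbf{h}(\bar U,\bar U)=0$ from the corollary, that $\mu=0$, i.e.\ $M$ is totally geodesic, and then run the same argument with $\mathbf{h}\equiv 0$.

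I do not anticipate a genuine obstacle here; the argument is essentially a two-line computation built on the earlier lemmas. The only points requiring care are (i) invoking $\mathbf{h}(K,\bar U)=-QK$ in the general form valid for all tangent $K$, rather than only for $K$ lying in one of the distributions, and (ii) remembering that membership in $\mathcal D^{\bot}$ annihilates $PK$, so that $\bar GK$ is purely normal and may be identified with $QK$. With these in hand, the conclusion follows immediately.
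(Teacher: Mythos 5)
Your proof is correct and follows essentially the same route as the paper: use total umbilicity to get $\mathbf{h}(K,\bar U)=0$ for $K\in\Gamma(\mathcal D^{\bot})$, identify $\mathbf{h}(K,\bar U)$ with $-QK=-\bar GK$ on $\mathcal D^{\bot}$, and conclude $\mathcal D^{\bot}=\{0\}$. Your version is in fact slightly more complete, since you explicitly justify the final implication $\bar GK=0\Rightarrow K=0$ via $\bar G^{2}K=-K$ on $sp\{\bar U,\bar V\}^{\bot}$, a step the paper leaves implicit.
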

\begin{proof}
	Let $ M $ be a total umbilical semi-invariant submanifold. Then for $\forall Z \in \Gamma(\mathcal{D})^{\bot} $ from (\ref{totally umbl}) we have 
	\begin{equation*}
	\mathbf{h}(Z,\bar{U})=\bar{g}(Z,\bar{U})\mu=0.
	\end{equation*}
	On the other hand from (\ref{h(X,U)}) we have $ \mathbf{h}(Z,\bar{U})-QZ=\bar{G}Z$. Thus $ \bar{G}Z=0 $ and $ \mathcal{D}^{\bot}=0 $. So $M  $ is an invariant submanifold. 
\end{proof}
From above theorem we obtain following corollary. 
\begin{cor}
	There does not exist total umbilical proper semi-invariant submanifold  of a normal complex contact metric manifold. 
\end{cor}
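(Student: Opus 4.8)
The plan is to read off this corollary directly from the preceding theorem together with the definition of a proper semi-invariant submanifold, arguing by contradiction. First I would recall that, by definition, a proper semi-invariant submanifold is one that is neither invariant nor anti-invariant; in particular, properness forces both $\dim\mathcal{D}\neq 0$ and $\dim\mathcal{D}^{\bot}\neq 0$. So I would assume, toward a contradiction, that some total umbilical proper semi-invariant submanifold $M$ of a normal complex contact metric manifold $\bar{M}$ exists.

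The next step is simply to invoke the previous theorem: since $M$ is total umbilical, that theorem asserts $M$ is an invariant submanifold, i.e.\ $\mathcal{D}^{\bot}=0$. Concretely, this is exactly the deduction carried out there, where one evaluates the umbilicity relation $\mathbf{h}(Z,\bar{U})=\bar{g}(Z,\bar{U})\mu=0$ on $Z\in\Gamma(\mathcal{D}^{\bot})$, combines it with the identity linking $\mathbf{h}(Z,\bar{U})$ to $\bar{G}Z$, and concludes $\bar{G}Z=0$, hence $\mathcal{D}^{\bot}=0$. But $\mathcal{D}^{\bot}=0$ directly contradicts the assumption that $M$ is proper. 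Hence no such submanifold exists. I do not anticipate any real obstacle: all of the analytic content already resides in the preceding theorem, and the corollary is a one-line logical consequence of it, the only point to be careful about being the bookkeeping of which of $\mathcal{D}$, $\mathcal{D}^{\bot}$ is forced to vanish.
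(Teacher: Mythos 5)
Your argument is correct and is exactly how the paper obtains this corollary: it is stated as an immediate consequence of the preceding theorem (total umbilical $\Rightarrow$ invariant, i.e.\ $\mathcal{D}^{\bot}=0$), which contradicts the properness requirement that $\dim\mathcal{D}^{\bot}\neq 0$. No further comment is needed.
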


\end{document}